\numberwithin{equation}{section}
\numberwithin{figure}{section}
\theoremstyle{plain}
\newtheorem{thm}{\protect\theoremname}[section]
  \theoremstyle{remark}
  \newtheorem{rem}[thm]{\protect\remarkname}
  \theoremstyle{definition}
  \newtheorem{defn}[thm]{\protect\definitionname}
  \theoremstyle{plain}
  \newtheorem{prop}[thm]{\protect\propositionname}
  \theoremstyle{plain}
  \newtheorem{lem}[thm]{\protect\lemmaname}
  \theoremstyle{plain}
  \newtheorem{cor}[thm]{\protect\corollaryname}
  \providecommand{\corollaryname}{Corollary}
  \providecommand{\definitionname}{Definition}
  \providecommand{\lemmaname}{Lemma}
  \providecommand{\propositionname}{Proposition}
  \providecommand{\remarkname}{Remark}
\providecommand{\theoremname}{Theorem}
\begin{document}

\title{Completion of continuity spaces with uniformly vanishing asymmetry}

\author{Alveen Chand, Ittay Weiss}
\begin{abstract}
The classical Cauchy completion of a metric space (by means of Cauchy
sequences) as well as the completion of a uniform space (by means
of Cauchy filters) are well-known to rely on the symmetry of the metric
space or uniform space in question. For qausi-metric spaces and quasi-uniform
spaces various non-equivalent completions exist, often defined on
a certain subcategory of spaces that satisfy a key property required
for the particular completion to exist. The classical filter completion
of a uniform space can be adapted to yield a filter completion of
a metric space. We show that this completion by filters generalizes
to continuity spaces that satisfy a form of symmetry which we call
uniformly vanishing asymmetry. 
\end{abstract}
\maketitle

\section{Introduction}

The theories of the completion of metric spaces and the completion
of uniform spaces are well-known and understood. There is little to
no doubt as to what completion should mean in these cases and there
are several (equivalent of course) constructions of the completions.
The situation is different when considering quasi-metric spaces and
quasi-uniform spaces. The lack of symmetry (see \cite{SH} for a detailed
account of symmetry and completions in the context of quantaloid enriched
categories) sabotages the standard completion constructions that work
in the symmetric case and the theory bifurcates with several different
notions of complete objects and different completion processes existing
in the literature (see e.g., \cite{Alemany96,Carlson71,Doitchinov88,Doitchinov91,Flagg99,Kivuvu08,Kivuvu09,Kunzi02,Lowen99,Render95,Romaguera02,Sherwood66}).

Consider the category ${\bf QMet}$ of quasi-metric spaces and uniformly
continuous functions, and the category ${\bf QUnif}$ of quasi-uniform
spaces and their morphisms. With a given quasi-metric space $(X,d)$
one can associate two quasi-uniform structures, one generated by the
entourages $U_{\varepsilon}=\{(x,y)\in X\times X\mid d(x,y)<\varepsilon\}$,
the other generated by the entourages $U^{\varepsilon}=\{(x,y)\in X\times X\mid d(y,x)<\varepsilon\}$,
giving rise to the two parallel functors in the diagram 
\[
\xymatrix{{\bf Met^{uva}}\ar[r] & {\bf QMet}\ar@<2pt>[r]^{U(-)}\ar@<-2pt>[r]_{U(-^{op})} & {\bf QUnif}}
\]
and it is natural to consider their equalizer. From the universal
property of the equalizer it follows that ${\bf Met^{uva}}$ extends
${\bf Met}$, the full subcategory of ${\bf QMet}$ spanned by the
ordinary metric spaces, but it is strictly larger. 

A quasi-metric space is the same thing as a $V$-\emph{continuity space
}or a $V$-\emph{space, }a concept introduced by Flagg in \cite{Flagg97a},
where $V$ is the value quantale $[0,\infty]$, viewed as a complete
lattice, with ordinary addition. Everything above can be repeated
with ${\bf Met}$ and ${\bf QMet}$ replaced, respectively, by ${\bf Met_{V}}$
(the category of symmetric $V$-spaces and uniformly continuous functions)
and ${\bf QMet_{V}}$ (the category of $V$-spaces and uniformly continuous
functions) for any value quantale $V$. In more detail, the aim of
this work is the construction of the dotted functors in the commutative
diagram
\[
\xymatrix{ & {\bf QMet_{V}\ar@<2pt>[r]\ar@<-2pt>[r]} & {\bf QUnif}\\
{\bf Met_{V}}\ar@{^{(}->}[r]\ar@{..>}[d] & {\bf Met_{V}^{uva}}\ar[u]^{e}\ar[r]\ar@{..>}[d] & {\bf Unif}\ar[u]\ar[d]\\
{\bf cMet_{V}}\ar@{^{(}->}[r] & {\bf cMet_{V}^{uva}}\ar[r] & {\bf cUnif}
}
\]
where the three lower vertical arrows are completion functors, thus
showing that the classical completion extends to the equalizer. In
more detail, in the diagram, the lower right vertical functor is the
standard construction of the completion of a uniform space via minimal
Cauchy filters. We show that this construction extends to separated
$V$-spaces in the equalizer. The construction is a metric re-incarnation
of that giving rise to the lower right functor. Most of the existing
notions of completions of $V$-spaces, when restricted to symmetric
spaces, yield (essentially) the same completion but these constructions
bifurcate to non-isometric completions for general $V$-spaces. It
is quite straightforward to manually check for most completions in
the literature that when restricted to $\mathbf{Met_{V}^{uva}}$,
the results are isometric. We thus expand the domain of the definition
of the standard completion to what appears to be the maximum possible.

In the context of the completion of metric spaces, one of the striking differences between the completion by means of Cauchy sequences and Cauchy filters is that the former requires a quotient construction, identifying sequences of distance $0$, while the latter enjoys a canonical choice of representative, namely the round filter $\mathcal F_\succ $ generated by fattening the elements in $\mathcal F$. In the construction of the completion we give below we also treat round filters in the full generality of $V$-spaces and we exhibit the 'roundification' process as a left adjoint on appropriately constructed categories. 

The plan of the article is as follows. Section $2$ recounts some
basic facts on value quantales and $V$-spaces following \cite{Flagg97a}.
Section $3$ introduces the concept of uniformly vanishing asymmetry,
the notion of symmetry required for the completion, which is then
presented in Section $4$.

\section{Value quantales and $V$-spaces}

Recall that a \emph{complete lattice} $L$ is a poset which possesses,
for all $S\subseteq L$, a meet $\bigwedge S$ and a join $\bigvee S$.
The top and bottom elements are denoted, respectively, by $\infty$
and $0$. The \emph{well-above }relation $\succ$ is derived from
the poset structure in $L$ (or any poset) as follows. For $a,b\in L$,
$a$ is said to be well-above $b$, denoted by $a\succ b$, or $b\prec a$,
if, given any $S\subseteq L$ such that $b\ge\bigwedge S$, there
exists $s\in S$ with $a\ge s$. 

A \emph{value quantale}, as introduced in \cite{Flagg97a} by Flagg,
is a pair $(V,+)$ where $V$ is a complete lattice and $+$ is an
associative and commutative binary operation on $V$ such that
\begin{itemize}
\item $x+0=x$
\item $x=\bigwedge\{y\in V\mid y\succ x\}$
\item $x+\bigwedge S=\bigwedge(x+S)$ 
\item $a\wedge b\succ0$
\end{itemize}
for all $x\in V$, $S\subseteq V$, and $a,b\in V$ with $a\succ0$
and $b\succ0$ ($x+S$ means $\{x+s\mid s\in S\}$). When the ambient
value quantale $V$ is clear from the context, we will write $\varepsilon\succ0$
as shorthand for the claim that $\varepsilon\in V$ and that $\varepsilon\succ0$
holds in $V$.
\begin{rem}
More commonly, a quantale is defined by the duals of the axioms above,
but in the context of this work we adhere to Flagg's original notation. 
\end{rem}

\subsection{Value quantale fundamentals}

We list those properties of value quantales that are needed for the
proofs that follow. We provide no arguments for the claims we make
in this section since the proofs are either immediate or are found
in \cite{Flagg97a}. Let $V$ be a value quantale. 
\begin{itemize}
\item For all $x,y,z\in V$,

\begin{enumerate}
\item if $x\succ y$, then $x\ge y$
\item if $x\succ y$ and $y\ge z$, then $x\succ z$
\item if $x\ge y$ and $y\succ z$, then $x\succ z$.
\end{enumerate}
\item For all $x,y,a,b\in V$, if $x\le y$ and $a\le b$, then $x+a\le y+b$. 
\item For all $\varepsilon\succ0$, there exists $\delta\succ0$ such that
$\delta+\delta\le\varepsilon$. More generally, for all $a\in V$ and
$n\ge1$ there exists $\delta\succ0$ such that $n\cdot\delta\le\varepsilon$,
where $n\cdot\delta$ denotes the $n$-fold addition of $\delta$
with itself. 
\item For all $x,z\in V$, if $x\prec z$, then there exists $y\in V$ with
$x\prec y\prec z$ (this result is known as the \emph{interpolation
property}).
\item Fix $b\in V$. Since $b+\square:V\to V$ preserves meets, it has a
left adjoint denoted by $\square-b:V\to V$, characterized by the
property that $a-b\le c\iff a\le b+c$, for all $a,c\in V$. Among
the numerous properties of this notion of subtraction, the one we
will use is $(a-b)-c=a-(b+c)$. 
\item For all $a\in V$, we have $a=\bigwedge\{a+\varepsilon\mid\varepsilon\succ0\}$.
Consequently, for all $a,b\in V$, $a\le b$ if, and only if, $a\le b+\varepsilon$
for all $\varepsilon\succ0$. 
\end{itemize}
Value quantales are the objects of a $2$-category $\mathbb{V}$ as
follows. A morphism $\alpha:V\to W$ in $\mathbb{V}$ is a monotone
function of the underlying lattices such that
\begin{itemize}
\item $\alpha(0)=0$, and
\item $\alpha(a+b)\le\alpha(a)+\alpha(b)$
\end{itemize}
for all $a,b\in V$. Each hom set $\mathbb{V}(V,W)$ is given a poset
structure by declaring, for $\alpha,\beta:V\to W$, that $a\le\beta$
precisely when $\beta(a)\le\alpha(a)$ for all $a\in V$. Interpreting
the poset $\mathbb{V}(V,W)$ as a category thus describes the $2$-cells
in the $2$-category $\mathbb{V}$.

\subsection{$V$-spaces\label{sub:catPers}}

Flagg introduced value quantales to replace the traditional non-negative
extended real numbers and act as generalized codomains for distance
functions $d:X\times X\to V$. Thus, a \emph{$V$-space} (called a
$V$-\emph{continuity space} in \cite{Flagg97a}), is a triple $(X,d,V)$
where $V$ is a value quantale, $X$ is a set, and $d:X\times X\to V$
is a function satisfying
\begin{itemize}
\item $d(x,x)=0$; and
\item $d(x,z)\le d(x,y)+d(y,z)$ 
\end{itemize}
for all $x,y,z\in X$. A $V$-space $(X,d,V)$ is \emph{symmetric}
if $d(x,y)=d(y,x)$ for all $x,y\in X$ and it is called \emph{separated
}if, for all $x,y\in X$, the equalities $d(x,y)=0=d(y,x)$ imply
$x=y$. 

For a given value quantale $V$, the category ${\bf Met_{V}}$ consists
of all symmetric $V$-spaces as objects and all \emph{uniformly continuous
}mappings as morphisms $f:X\to Y$ (i.e., those functions satisfying
that for all $\varepsilon\succ0$ there exists $\delta\succ0$ such that
$d(f(x_{1}),f(x_{2}))\le\varepsilon$ whenever $d(x_{1},x_{2})\le\delta$).
Ignoring size issues, the assignment $V\mapsto{\bf Met_{V}}$ extends
to a $2$-functor ${\bf Met_{-}}:\mathbb{V}\to{\bf Cat}$ into the
$ $$2$-category of categories. In more detail, if $\alpha:V\to W$
is a morphism of value quantales, then ${\bf Met_{\alpha}}:{\bf Met_{V}}\to{\bf Met_{W}}$
sends a $V$-space $(X,d)$ to $(X,\alpha_{*}d)$ where $\alpha_{*}d(x,y)=\alpha(d(x,y))$,
which is easily seen to be a $W$-space. If now $\beta:V\to W$ is
another morphism such that $\alpha\le\beta$, then it is immediately
verified that there is a natural transformation ${\bf Met_{\alpha}\to{\bf Met}_{\beta}}$,
where the component at $X$ is the identity function on the underlying
sets. 

Similarly, one defines the categories ${\bf QMet_{V}}$ of $V$-spaces
and again one obtains a similar $2$-functor ${\bf QMet}_{-}:\mathbb{V}\to{\bf Cat}$.
The full subcategory ${\bf QMet_{V}^{0}}$ of ${\bf QMet_{V}}$ spanned
by the separated $V$-spaces is reflective, with the reflector $-_{0}:{\bf QMet_{V}}\to{\bf QMet_{V}^{0}}$
mapping $X$ to $X_{0}=X/\sim$, where $x\sim y$ precisely when $d(x,y)=d(y,x)=0$.
Similarly, the full subcategory ${\bf Met_{V}^{0}}$ of ${\bf Met_{V}}$
spanned by the separated symmetric $V$-spaces is reflective. It is
immediate that if $(X,d)$ is a $V$-space, then so is the \emph{dual
space} $X^{op}=(X,d^{op})$, where $d^{op}(x,y)=d(y,x)$. 
\begin{rem}
$V$-spaces are in fact enriched $V$-categories. However, notice
that then enriched functors correspond to non-expanding functions
rather than the uniformly continuous ones we consider.
\end{rem}
$V$-spaces are general enough to capture all topological spaces in
the sense that for every topological space $X$, there is a value
quantale $V$ such that $X$ is $V$-metrizable (Theorem $4.15$ in
\cite{Flagg97a}). Further, the category ${\bf Top}$ is equivalent
to the category ${\bf QMet}_{T}$ whose objects are all pairs $(V,X)$
where $V$ is a value quantale and $X$ is a $V$-space, and a morphism
$(V,X)\to(W,Y)$ is a continuous function $f:X\to Y$ (see \cite{Weiss13}
for more details).

\section{Uniformly vanishing asymmetry}

We introduce now a class of spaces with a sufficient amount of symmetry
to allow for the classical completion via Cauchy filters to carry
through. 

For a $V$-space $X$, a point $x\in X$, and $\varepsilon\succ0$
let ${\bf B}_{\varepsilon}(x)=\{y\in X\mid d(x,y)\le\varepsilon\}$ and
similarly let ${\bf B}^{\varepsilon}(x)=\{y\in X\mid d(y,x)\le\varepsilon\}$.
We extend the notation ${\bf B}_{\varepsilon}(x)$ to subsets $S\subseteq X$
by defining ${\bf B}_{\varepsilon}(S)=\bigcup_{s\in S}{\bf B}_{\varepsilon}(s)$,
with ${\bf B}^{\varepsilon}(S)$ defined similarly. Notice that ${\bf B}_{\varepsilon}(x)$
in $X^{op}$ is precisely ${\bf B}^{\varepsilon}(x)$ in $X$. The set
${\bf B}_{\varepsilon}(x)$ is a closed set in the topology generated
by the sets of the form $\{y\in X\mid d(x,y)\prec\varepsilon\}$,
where $x$ varies over $X$ and $\varepsilon\succ0$ varies in $V$
(see Theorem $4.4$ in \cite{Flagg97a}). That topology is denoted
by $\mathcal{O}(X)$ and one obtains the functor $\mathcal{O}:{\bf QMet_{V}}\to{\bf Top}$.
A straightforward verification shows that a $V$-space $X$ gives
rise to a quasi-uniform space $U(X)$, where the entourages are generated
by $\{(x,y)\in X\times X\mid d(x,y)\le\varepsilon\}$, where $\varepsilon\succ0$
varies in $V$, giving rise to a fully faithful functor $U:{\bf QMet_{V}}\to{\bf QUnif}$. 

For a $V$-space $X$, the conditions 
\begin{itemize}
\item for all $x\in X$ and for all $\varepsilon\succ0$ there exists $\delta\succ0$
such that ${\bf B}^{\delta}(x)\subseteq{\bf B}_{\varepsilon}(x)$ and
such that ${\bf B}_{\delta}(x)\subseteq{\bf B}^{\varepsilon}(x)$ (any
such $\delta$ will be called a \emph{modulus of symmetry }for $\varepsilon$);
\item the identity function $X^{op}\to X$ is a homeomorphism;
\item $\mathcal{O}(X)=\mathcal{O}(X^{op})$
\end{itemize}
are equivalent. If $X$ satisfies these conditions, then $X$ is said
to have\emph{ vanishing asymmetry.} Similarly, the conditions
\begin{itemize}
\item for all $\varepsilon\succ0$ there exists $\delta\succ0$ such that
if $d(y,x)\le\delta$, then $d(x,y)\le\varepsilon$ (any such $\delta$
will be called a \emph{uniform modulus of symmetry }for $\varepsilon$);
\item the identity functions $X^{op}\to X$ and $X\to X^{op}$ are uniformly
continuous;
\item $U(X)=U(X^{op})$
\end{itemize}
are equivalent. If $X$ satisfies these conditions, then $X$ is said
to have \emph{uniformly vanishing asymmetry}. Clearly, if $X$ has
uniformly vanishing asymmetry, then $X$ has vanishing asymmetry. 

Let ${\bf Met_{V}^{va}}$ and ${\bf Met_{V}^{uva}}$ be the full subcategories
of ${\bf QMet_{V}}$ spanned by the spaces with vanishing asymmetry
and the spaces with uniformly vanishing asymmetry, respectively. Consider
the diagram
\[
\xymatrix{{\bf Met_{V}^{uva}}\ar@{^{(}->}[r]\ar@{^{(}->}[d] & {\bf QMet_{V}}\ar@<2pt>[r]^{U(-)}\ar@<-2pt>[r]_{U(-^{op})}\ar[d]_{id} & {\bf QUnif}\ar@<-2pt>[d]\ar@<2pt>[d]\\
{\bf Met_{V}^{va}}\ar@{^{(}->}[r] & {\bf QMet_{V}}\ar@<2pt>[r]^{\mathcal{O}(-)}\ar@<-2pt>[r]_{\mathcal{O}(-^{op})} & {\bf Top}
}
\]
where the square on the left consists of inclusions, and the right
vertical arrows are the standard constructions of the topology associated
to a quasi-uniform space. The diagram commutes as long as one does
not incorrectly mix different functors in the square on the right,
and we note that from the definition of (uniformly) vanishing asymmetry,
the top and bottom parts of the diagram are equalizers.

\section{Completion}

From this point onwards, we fix a value quantale $V$ and a $V$-space
$X$. We develop the relevant ingredients for constructing a completion
of $X$ as the set of all minimal Cauchy filters on $X$. 

Recall that a \emph{filter }on a set $X$ is a non-empty collection
$\mathcal{F}\subseteq\mathcal{P}(X)$ such that $A\in\mathcal{F}\implies B\in\mathcal{F}$
for all $A\subseteq B\subseteq X$, and $A\cap B\in\mathcal{F}$ for
all $A,B\in\mathcal{F}$ (we do not require that $\emptyset\notin\mathcal{F}$,
so in particular the power-set $\mathcal{P}(X)$ is a filter, the
unique filter containing the empty set, referred to as an \emph{improper
filter}). A \emph{filter base} is a collection $\mathcal{B}\subseteq\mathcal{P}(X)$
such that for all $A,B\in\mathcal{B}$ there exists $C\in\mathcal{B}$
with $C\subseteq A\cap B$. It follows immediately that a filter base
$\mathcal{B}$ gives rise to a filter $\mathcal{F}$, the least filter
containing $\mathcal{B}$, given explicitly by $\mathcal{F}=\{D\subseteq X\mid\exists C\in\mathcal{B},C\subseteq D\}$.
By a filter (resp. filter base) on a $V$-space is meant a filter
(resp. filter base) on the underlying set. 

A filter $\mathcal{F}$ is said to \emph{converge }to $x\in X$, written
$\mathcal{F}\to x$, if ${\bf B}_{\varepsilon}(x)\in\mathcal{F}$ for
all $\varepsilon\succ0$. Convergence interpreted in $X^{op}$ is
referred to as op-convergence, thus $\mathcal{F}$ \emph{op-converges}
to $x$, denoted by $\mathcal{F}\to^{op}x$, when ${\bf B}^{\varepsilon}(x)\in\mathcal{F}$
for all $\varepsilon\succ0$.
\begin{defn}
A filter $\mathcal{F}$ on $X$ is said to be a \emph{Cauchy filter
}if for all $\varepsilon\succ0$ there exists $x\in X$ such that
${\bf B}_{\varepsilon}(x)\in\mathcal{F}$. If, moreover, $\mathcal{F}$
does not contain any proper Cauchy subfilter, then $\mathcal{F}$
is called a \emph{minimal Cauchy filter}. 
\end{defn}
$X$ is said to be \emph{Cauchy complete }if every proper Cauchy filter
on $X$ converges. The dual notion of a Cauchy filter is that of an
\emph{op-Cauchy filter}, namely when for all $\varepsilon\succ0$
there exists an $x\in X$ with ${\bf B}^{\varepsilon}(x)\in\mathcal{F}$,
that is $\mathcal{F}$ is op-Cauchy in $X$ precisely when $\mathcal{F}$
is Cauchy in $X^{op}$. The $V$-space $X$ is \emph{op-Cauchy complete
}if every proper op-Cauchy filter on $X$ op-converges. For spaces
with uniformly vanishing asymmetry introduced above, the dual concepts
of Cauchy completeness and op-Cauchy completeness coincide. A \emph{completion
}of $X$ is a Cauchy complete $V$-space $\hat{X}$ together with
an isometry $X\to\hat{X}$ with dense image in $\hat{X}$. 
\begin{defn}
A filter $\mathcal{F}$ in $X$ is said to be a \emph{round filter
}if for all $F\in\mathcal{F}$ there exists $\varepsilon\succ0$ such
that ${\bf B}_{\varepsilon}(x)\in\mathcal{F}$ implies ${\bf B}_{\varepsilon}(x)\subseteq F$,
for all $x\in X$.
\end{defn}
The omitted proof of the following result is completely formal. 
\begin{prop}
\label{prop:Cauchy plus round implies minimal cauchy}If $\mathcal{F}$
is Cauchy and round, then $\mathcal{F}$ is minimal Cauchy. 
\end{prop}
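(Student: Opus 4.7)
The plan is a short direct argument: I would take an arbitrary Cauchy filter $\mathcal{G}\subseteq\mathcal{F}$ and show $\mathcal{G}=\mathcal{F}$, which rules out any proper Cauchy subfilter of $\mathcal{F}$. The one inclusion $\mathcal{G}\subseteq\mathcal{F}$ is given, so the work is to establish $\mathcal{F}\subseteq\mathcal{G}$.

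To do this, I would fix an arbitrary $F\in\mathcal{F}$ and use roundness of $\mathcal{F}$ to produce $\varepsilon\succ 0$ with the property that every ball of the form ${\bf B}_{\varepsilon}(x)$ which happens to lie in $\mathcal{F}$ is automatically contained in $F$. Next I would invoke the Cauchy property of $\mathcal{G}$ applied to this particular $\varepsilon$: it yields some $x\in X$ with ${\bf B}_{\varepsilon}(x)\in\mathcal{G}$. Combining these two, since $\mathcal{G}\subseteq\mathcal{F}$ we also have ${\bf B}_{\varepsilon}(x)\in\mathcal{F}$, so by the choice of $\varepsilon$ we get ${\bf B}_{\varepsilon}(x)\subseteq F$. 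Because $\mathcal{G}$ is upward closed, this forces $F\in\mathcal{G}$, and $F$ was arbitrary in $\mathcal{F}$.

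There is essentially no obstacle; the argument uses only the definitions of \emph{Cauchy}, \emph{round}, and of a filter (upward closure). The author's remark that the proof is \emph{completely formal} matches this: the quantifier alternation in the definition of roundness (for each $F$ an $\varepsilon$ that works uniformly in $x$) is precisely what pairs with the existential quantifier in the Cauchy condition of $\mathcal{G}$, so the two notions dovetail without needing any interpolation or auxiliary lemma from the value quantale section. No separation, symmetry, or uniformly vanishing asymmetry hypotheses enter here; the statement holds for arbitrary $V$-spaces $X$.
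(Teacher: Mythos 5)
Your argument is correct and is precisely the ``completely formal'' proof the authors omit: pairing the $\varepsilon$ from roundness of $\mathcal{F}$ at a given $F$ with the Cauchy condition of the subfilter $\mathcal{G}$ at that same $\varepsilon$, then using upward closure of $\mathcal{G}$. Nothing is missing, and you are right that no symmetry or separation hypotheses are needed.
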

Let $\mathcal{F}$ be a filter. Consider the collection $\{{\bf B}_{\varepsilon}(F)\mid F\in\mathcal{F},\,\,\varepsilon\succ0\}$,
which is a filter base since ${\bf B}_{\varepsilon\wedge\delta}(F\cap F')\subseteq{\bf B}_{\varepsilon}(F)\cap{\bf B}_{\delta}(F')$
(recalling that $\varepsilon\wedge\delta\succ0$). The generated filter
is denoted by $\mathcal{F}_{\succ}$. 
\begin{prop}
\label{prop:Cauchy implies cauchy}If $\mathcal{F}$ is Cauchy, then
$\mathcal{F}_{\succ}$ is Cauchy.\end{prop}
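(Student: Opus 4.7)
The plan is to unwind the definitions directly and exploit the halving property of the value quantale $V$.

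Given $\varepsilon \succ 0$, I would first invoke the standard halving property (listed in the ``Value quantale fundamentals'' subsection) to produce $\delta \succ 0$ with $\delta + \delta \le \varepsilon$. Since $\mathcal{F}$ is Cauchy, there exists $x \in X$ such that ${\bf B}_{\delta}(x) \in \mathcal{F}$. The key observation is then that ${\bf B}_{\delta}({\bf B}_{\delta}(x))$ belongs to the generating filter base of $\mathcal{F}_{\succ}$ (take $F = {\bf B}_{\delta}(x)$ and the parameter $\delta$), hence lies in $\mathcal{F}_{\succ}$.

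Next I would show ${\bf B}_{\delta}({\bf B}_{\delta}(x)) \subseteq {\bf B}_{\varepsilon}(x)$ by a one-line triangle inequality argument: if $y \in {\bf B}_{\delta}({\bf B}_{\delta}(x))$, then there is $z \in {\bf B}_{\delta}(x)$ with $d(z,y) \le \delta$, so $d(x,y) \le d(x,z) + d(z,y) \le \delta + \delta \le \varepsilon$, giving $y \in {\bf B}_{\varepsilon}(x)$. Upward closure of $\mathcal{F}_{\succ}$ now yields ${\bf B}_{\varepsilon}(x) \in \mathcal{F}_{\succ}$, and since $x \in X$, this is exactly the Cauchy condition for $\mathcal{F}_{\succ}$ at $\varepsilon$.

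Since the argument is entirely formal, I do not expect any real obstacle. The only point that requires mild care is to remember that the Cauchy condition demands the witness to be a point of $X$ (not just any element of a base), but the fact that $x$ comes from the Cauchy condition for $\mathcal{F}$ itself takes care of that automatically. The proof is therefore a combination of halving $\varepsilon$, using Cauchyness of $\mathcal{F}$ to locate a suitable $x$, and a single triangle inequality to certify that the fattened ball sits inside ${\bf B}_{\varepsilon}(x)$.
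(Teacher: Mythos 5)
Your argument is correct and is essentially identical to the paper's proof: halve $\varepsilon$, use Cauchyness of $\mathcal{F}$ to get ${\bf B}_{\delta}(x)\in\mathcal{F}$, note ${\bf B}_{\delta}({\bf B}_{\delta}(x))\in\mathcal{F}_{\succ}$, and conclude via ${\bf B}_{\delta}({\bf B}_{\delta}(x))\subseteq{\bf B}_{2\cdot\delta}(x)\subseteq{\bf B}_{\varepsilon}(x)$. No issues.
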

\begin{proof}
Let $\varepsilon\succ0$ and $\delta\succ0$ with $2\cdot\delta\le\varepsilon$.
Let $x\in X$ with ${\bf B}_{\delta}(x)\in\mathcal{F}$, and so ${\bf B}_{\delta}({\bf B}_{\delta}(x))\in\mathcal{F}_{\succ}$.
Then ${\bf B}_{\varepsilon}(x)\in\mathcal{F}_{\succ}$ follows by ${\bf B}_{\delta}({\bf B}_{\delta}(x))\subseteq{\bf B}_{2\cdot\delta}(x)\subseteq{\bf B}_{\varepsilon}(x)$. \end{proof}
\begin{lem}
\label{lem:UBA implies round}If $X$ has uniformly vanishing asymmetry
and $\mathcal{F}\ne\mathcal{P}(X)$ is a filter on $X$, then $\mathcal{F}_{\succ}$
is round. \end{lem}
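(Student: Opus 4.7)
The plan is to unfold the definition of $\mathcal{F}_{\succ}$ being round and then exhibit, for each $G\in\mathcal{F}_{\succ}$, a suitable $\varepsilon\succ 0$ constructed from the data witnessing $G\in\mathcal{F}_\succ$, the uniform modulus of symmetry coming from the hypothesis on $X$, and a halving step in $V$. The properness assumption $\mathcal{F}\ne \mathcal{P}(X)$ will enter the argument only once, but crucially, to guarantee that certain intersections of filter members are non-empty so that a reference point is available for the triangle inequality.

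First I would fix $G\in\mathcal{F}_{\succ}$ and, by the definition of the filter generated by the fattening base, choose $F\in\mathcal{F}$ and $\eta\succ 0$ with ${\bf B}_{\eta}(F)\subseteq G$. Next, using the standard value quantale fact that every $\eta\succ 0$ admits $\eta_1\succ 0$ with $\eta_1+\eta_1\le\eta$, I would pick such $\eta_1$. Then, invoking the uniformly vanishing asymmetry of $X$, I would obtain a uniform modulus of symmetry $\mu\succ 0$ for $\eta_1$, meaning that $d(a,b)\le\mu$ implies $d(b,a)\le\eta_1$ for all $a,b\in X$. I would finally set $\varepsilon=\eta_1\wedge\mu$, which satisfies $\varepsilon\succ 0$ by the last axiom of a value quantale.

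To verify that this $\varepsilon$ works, I would suppose ${\bf B}_{\varepsilon}(x)\in\mathcal{F}_{\succ}$ for some $x\in X$, so that there exist $F'\in\mathcal{F}$ and $\delta'\succ 0$ with ${\bf B}_{\delta'}(F')\subseteq {\bf B}_{\varepsilon}(x)$, and in particular $F'\subseteq {\bf B}_{\varepsilon}(x)$. Since $\mathcal{F}$ is proper, $F\cap F'\in\mathcal{F}$ is non-empty, so I can fix $y_{0}\in F\cap F'$. Because $y_{0}\in F'$, $d(x,y_{0})\le\varepsilon\le\mu$, and by choice of $\mu$ this yields $d(y_{0},x)\le\eta_1$. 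For an arbitrary $z\in {\bf B}_{\varepsilon}(x)$, $d(x,z)\le\varepsilon\le\eta_1$, so the triangle inequality gives $d(y_{0},z)\le\eta_1+\eta_1\le\eta$. Since $y_{0}\in F$, this places $z$ in ${\bf B}_{\eta}(F)\subseteq G$, proving ${\bf B}_{\varepsilon}(x)\subseteq G$.

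The one subtlety, and the only place where real care is required, is the symmetrization step: the element $y_{0}$ only satisfies $d(x,y_{0})\le\varepsilon$ directly, whereas the triangle inequality for reaching $z$ via $y_{0}\in F$ demands control on $d(y_{0},x)$. The hypothesis of uniformly vanishing asymmetry exists precisely to convert the one into the other, and this is why the weaker assumption of vanishing asymmetry (which depends on the point) would not suffice. The rest of the argument is bookkeeping with the standard halving trick in value quantales.
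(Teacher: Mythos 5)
Your proof is correct and follows essentially the same route as the paper's: reduce to a basis element ${\bf B}_{\eta}(F)$, halve $\eta$, take a uniform modulus of symmetry for the half, intersect the two quantities to get the witnessing $\varepsilon$, and then use a point of $F\cap F'$ (non-empty by properness) as the pivot for the triangle inequality after flipping one distance via the modulus of symmetry. The only cosmetic difference is that you verify the roundness condition for an arbitrary $G\in\mathcal{F}_{\succ}$ rather than just for basis elements, which makes explicit a reduction the paper leaves implicit.
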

\begin{proof}
It suffices to show that for a given basis element ${\bf B}_{\varepsilon}(F)\in\mathcal{F}_{\succ}$
there exists a $\delta\succ0$ such that if ${\bf B}_{\delta}(y)\in\mathcal{F}_{\succ}$,
then ${\bf B}_{\delta}(y)\subseteq{\bf B}_{\varepsilon}(F)$. Let $\delta_{1}\succ0$
with $2\cdot\delta_{1}\le\varepsilon$ and let $\delta_{2}\succ0$
be a uniform modulus of symmetry for $\delta_{1}$. Set $\delta=\delta_{1}\wedge\delta_{2}$.
Suppose that ${\bf B}_{\delta}(y)\in\mathcal{F}_{\succ}$ for some
$y\in X$. Then ${\bf B}_{\delta}(y)\supseteq{\bf B}_{\varepsilon'}(F')\supseteq F'$
for some $F'\in\mathcal{F}$ and $\varepsilon'\succ0$. Choose some $x\in F\cap F'$.
Then, $x\in{\bf B}_{\delta}(y)$ and so $d(y,x)\le\delta$. To show
now that ${\bf B}_{\delta}(y)\subseteq{\bf B}_{\varepsilon}(F)$, notice
that if $z\in{\bf B}_{\delta}(y)$, then $d(y,z)\le\delta$, and so
$d(x,z)\le d(x,y)+d(y,z)\le\delta_{1}+\delta_{1}\le\varepsilon$,
and thus $z\in{\bf B}_{\varepsilon}(F)$. \end{proof}
\begin{cor}
\label{cor:problematicSolved}If $X$ has uniformly vanishing asymmetry
and $\mathcal{F}\ne\mathcal{P}(X)$ is a Cauchy filter on $X$, then
$\mathcal{F}_{\succ}$ is a minimal Cauchy filter.
\end{cor}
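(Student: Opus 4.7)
The plan is to chain the three results immediately preceding the corollary. Starting from the hypothesis that $\mathcal{F}$ is a Cauchy filter with $\mathcal{F}\ne\mathcal{P}(X)$ on a space $X$ with uniformly vanishing asymmetry, I would first invoke Proposition \ref{prop:Cauchy implies cauchy} to obtain that $\mathcal{F}_{\succ}$ is Cauchy, then apply Lemma \ref{lem:UBA implies round}, whose hypotheses are precisely that $X$ has uniformly vanishing asymmetry and $\mathcal{F}\ne\mathcal{P}(X)$, to conclude that $\mathcal{F}_{\succ}$ is round. Finally, Proposition \ref{prop:Cauchy plus round implies minimal cauchy} yields that $\mathcal{F}_{\succ}$ is minimal Cauchy.

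No real obstacle is expected; this corollary is essentially a bookkeeping step that packages the three prior intermediate results into the single statement that will be directly useful for constructing the completion. The three ingredients were arranged precisely so that such a chaining goes through, and the only thing to check at each step is that the relevant hypothesis is in force, which in each case is either given by the corollary's hypothesis or produced by the preceding step.
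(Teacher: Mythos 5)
Your proof is correct and is exactly the argument the paper intends: the corollary is stated without proof precisely because it is the immediate chaining of Proposition~\ref{prop:Cauchy implies cauchy}, Lemma~\ref{lem:UBA implies round}, and Proposition~\ref{prop:Cauchy plus round implies minimal cauchy}, with the hypotheses matching as you describe.
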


\begin{cor}
If $X$ has uniformly vanishing asymmetry, then a filter $\mathcal{F}$
is minimal Cauchy if, and only if, $\mathcal{F}$ is Cauchy and round. \end{cor}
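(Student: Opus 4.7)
For the backward direction, I would simply appeal to Proposition \ref{prop:Cauchy plus round implies minimal cauchy}, which establishes exactly that \emph{Cauchy and round} implies \emph{minimal Cauchy}, with no use of the uniformly vanishing asymmetry hypothesis.

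For the forward direction, my plan is to exploit the fact that every filter $\mathcal{F}$ sits above its ``roundification'' $\mathcal{F}_{\succ}$. Since $d(s,s)=0$, every $s \in F \in \mathcal{F}$ lies in ${\bf B}_{\varepsilon}(s)$, so every basis element ${\bf B}_{\varepsilon}(F)$ of $\mathcal{F}_{\succ}$ contains $F$ and thus already belongs to $\mathcal{F}$; hence $\mathcal{F}_{\succ} \subseteq \mathcal{F}$. If I further know that $\mathcal{F}$ is proper and Cauchy, then Proposition \ref{prop:Cauchy implies cauchy} shows that $\mathcal{F}_{\succ}$ is a Cauchy subfilter of $\mathcal{F}$, so minimality of $\mathcal{F}$ forces $\mathcal{F}=\mathcal{F}_{\succ}$. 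Lemma \ref{lem:UBA implies round} then supplies roundness of $\mathcal{F}_{\succ}$, and hence of $\mathcal{F}$.

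The only small piece left to secure is the properness of a minimal Cauchy filter. If $X = \emptyset$, no filter can be Cauchy and the statement is vacuous; otherwise, picking any point $x \in X$, the neighbourhood filter $\{A \subseteq X \mid {\bf B}_{\varepsilon}(x) \subseteq A \text{ for some } \varepsilon \succ 0\}$ is a proper Cauchy filter strictly smaller than $\mathcal{P}(X)$, so the improper filter is never minimal Cauchy. I do not foresee any real obstacle in this argument: the substance has been packaged into the preceding three results, and the corollary only records that, under uniformly vanishing asymmetry, the assignment $\mathcal{F} \mapsto \mathcal{F}_{\succ}$ acts as the identity on minimal Cauchy filters, thereby supplying the converse to Proposition \ref{prop:Cauchy plus round implies minimal cauchy}.
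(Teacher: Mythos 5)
Your proof is correct and follows essentially the same route as the paper: the backward direction is Proposition~\ref{prop:Cauchy plus round implies minimal cauchy}, and the forward direction uses $\mathcal{F}_{\succ}\subseteq\mathcal{F}$ together with Proposition~\ref{prop:Cauchy implies cauchy} and minimality to get $\mathcal{F}=\mathcal{F}_{\succ}$, which is round by Lemma~\ref{lem:UBA implies round}. You merely spell out two points the paper leaves implicit (why $\mathcal{F}_{\succ}\subseteq\mathcal{F}$ and why a minimal Cauchy filter is proper), and both of those verifications are accurate.
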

\begin{proof}
One direction is Proposition~\ref{prop:Cauchy plus round implies minimal cauchy}.
For the other direction, if $\mathcal{F}$ is minimal Cauchy, then
$\mathcal{F}=\mathcal{F}_{\succ}$, which is round. 
\end{proof}
The results above translate to interesting categorical relations between
Cauchy and round filters, as we now show. Let ${\bf Fil_{V}}$ be
the category whose objects are all pairs $(X,\mathcal{F})$ where
$X$ is a $V$-space with uniformly vanishing asymmetry and $\mathcal{F}$
is a filter on $X$. The morphisms $f:(X,\mathcal{F})\to(Y,\mathcal{G})$
are uniformly continuous functions $f:X\to Y$ with the property that
$f(\mathcal{F})\supseteq\mathcal{G}$, where $f(\mathcal{F})=\{S\subseteq Y\mid f^{-1}(S)\in\mathcal{F}\}$
(which is easily seen to be a filter). Let ${\bf RFil_{V}}$ and ${\bf CFil_{V}}$
be the full subcategories of ${\bf Fil_{V}}$ spanned by round filters
and by Cauchy filters, respectively. Let ${\bf CRFil_{V}}={\bf CFil_{V}}\cap{\bf RFil_{V}}$.
Finally, let ${\bf rFil_{V}}$ be the full subcategory of ${\bf Fil_{V}}$
spanned by the \emph{restricted} objects, i.e., objects $(X,\mathcal{F})$
where $\mathcal{F}\ne\mathcal{P}(X)$ if $X\ne\emptyset$. Similarly
one defines the other restricted full subcategories. Consider the
diagram 
\[
\xymatrix{ &  & {\bf rFil_{V}}\ar[d]\ar@<-4pt>[ddll]_{(-)_{\succ}}\\
 &  & {\bf Fil_{V}}\ar[d]\\
{\bf rRFil_{V}}\ar[uurr]\ar[r]^{=} & {\bf RFil_{V}}\ar@<-2pt>@{..>}[ur]\ar[r] & {\bf Met_{V}^{uva}}\ar@{-->}@<-4pt>[u]\ar@{..>}@<4pt>[u]\ar@{..>}@<4pt>[l]\ar@{-->}@<4pt>[r] & {\bf CFil_{V}\ar@{-->}@<-2pt>[ul]\ar[l]} & {\bf rCFil_{V}\ar[uull]\ar[l]}\ar@<-2pt>[ddll]_{\quad(-)_{\succ}}\\
 &  & {\bf CRFil_{V}}\ar@<-2pt>[ur]\ar[ul]\ar[u]\\
 &  & {\bf rCRFil_{V}}\ar[u]^{=}\ar@<-2pt>[uurr]\ar[uull]
}
\]
where the upwards directed arrows in both diamonds are inclusion functors
and all of the arrows in the smaller diamond pointing towards the
centre are the obvious forgetful functors. The other arrows (which
are detailed below), with the exception of the pair on the upper left
side of the outer diamond, are all adjunctions, with the left adjoint
depicted on top or to the left of its right adjoint. The left adjoint
${\bf Met_{V}^{uva}\to{\bf Fil_{V}}}$ sends $X$ to $(X,\mathcal{P}(X))$
while the right adjoint sends $X$ to $(X,\{X\})$. For these functors
the dotted and the dashed triangles commute. We note that in the degenerate
case $V=\{0=\infty\}$, the inner diamond reduces to identity functors,
${\bf Met_{V}^{uva}\cong{\bf Set}}$, and ${\bf Fil_{V}}$ is the
category of filters introduced in \cite{Blass77}. 
\begin{rem}
Regarding the forgetful functor $p:{\bf CRFil_{V}\to Met_{V}^{uva}},$
recall that the fiber over an object $X$ is the category consisting
of all of the objects in ${\bf CRFil_{V}}$ that project to $X$ and
all morphisms that project to the identity on $X$. This category
is essentially a set and is precisely the completion of $X$ we construct
below. \end{rem}
\begin{prop}
\label{prop:Problemaitc}The construction $(X,\mathcal{F})\mapsto(X,\mathcal{F}_{\succ})$
is the object part of a functor $(-)_{\succ}:{\bf Fil_{V}\to{\bf Fil_{V}}}$
which further sends $f:\mathcal{F}\to\mathcal{G}$ to $f:\mathcal{F}_{\succ}\to\mathcal{G}_{\succ}$.
The restriction of this functor to ${\bf rFil_{V}}$ gives rise to
the functor at the top left of the diagram above. \end{prop}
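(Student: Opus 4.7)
The plan is to verify three things: first, that whenever $f:(X,\mathcal{F})\to(Y,\mathcal{G})$ is a morphism in $\mathbf{Fil_{V}}$, the same underlying function defines a morphism $(X,\mathcal{F}_{\succ})\to(Y,\mathcal{G}_{\succ})$; second, that this assignment preserves identities and composition; and third, that the restriction to $\mathbf{rFil_{V}}$ factors through $\mathbf{rRFil_{V}}$.

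For the first point, the substantive requirement is that $f(\mathcal{F}_{\succ})\supseteq\mathcal{G}_{\succ}$. It suffices to show that every basic element of $\mathcal{G}_{\succ}$, namely a set of the form $\mathbf{B}_{\varepsilon}(G)$ with $G\in\mathcal{G}$ and $\varepsilon\succ0$, has preimage in $\mathcal{F}_{\succ}$. This is where uniform continuity enters: choose $\delta\succ0$ so that $d_{X}(x_{1},x_{2})\le\delta$ implies $d_{Y}(f(x_{1}),f(x_{2}))\le\varepsilon$. Since $\mathcal{G}\subseteq f(\mathcal{F})$, the set $F=f^{-1}(G)$ lies in $\mathcal{F}$. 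I then claim
\[
\mathbf{B}_{\delta}(F)\subseteq f^{-1}(\mathbf{B}_{\varepsilon}(G));
\]
indeed, if $y\in\mathbf{B}_{\delta}(F)$ then $d_{X}(x,y)\le\delta$ for some $x\in F$, whence $d_{Y}(f(x),f(y))\le\varepsilon$ with $f(x)\in G$, so $f(y)\in\mathbf{B}_{\varepsilon}(G)$. This inclusion exhibits $f^{-1}(\mathbf{B}_{\varepsilon}(G))$ as containing a basic element of $\mathcal{F}_{\succ}$, hence as a member of $\mathcal{F}_{\succ}$. The main (minor) obstacle is keeping track of the bookkeeping around the uniform modulus; everything else is immediate.

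Functoriality is then trivial: the assignment preserves the underlying function, so $(\mathrm{id}_{X})_{\succ}=\mathrm{id}_{X}$ and $(g\circ f)_{\succ}=g_{\succ}\circ f_{\succ}$ hold as equalities of set-functions, and both are uniformly continuous since $f$ and $g$ are.

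For the restriction to $\mathbf{rFil_{V}}$, I need to check that $(X,\mathcal{F}_{\succ})$ lies in $\mathbf{rRFil_{V}}$ whenever $(X,\mathcal{F})$ lies in $\mathbf{rFil_{V}}$. Roundness of $\mathcal{F}_{\succ}$ is Lemma~\ref{lem:UBA implies round}. For propriety, observe that every basic element $\mathbf{B}_{\varepsilon}(F)$ of $\mathcal{F}_{\succ}$ contains $F$ itself; so if $\emptyset\in\mathcal{F}_{\succ}$ then some $\mathbf{B}_{\varepsilon}(F)=\emptyset$, forcing $F=\emptyset$ and hence $\mathcal{F}=\mathcal{P}(X)$, contrary to $(X,\mathcal{F})$ being restricted (when $X\neq\emptyset$). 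This places the restricted functor into $\mathbf{rRFil_{V}}$, yielding the arrow depicted in the upper-left of the diagram.
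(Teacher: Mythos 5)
Your proof is correct and follows essentially the same route as the paper: the key step in both is the inclusion $\mathbf{B}_{\delta}(f^{-1}(S))\subseteq f^{-1}(\mathbf{B}_{\varepsilon}(S))$ furnished by uniform continuity, applied to basic elements of $\mathcal{G}_{\succ}$. You additionally spell out the propriety check and cite Lemma~\ref{lem:UBA implies round} directly for roundness, which is a slightly cleaner justification of the codomain than the paper's appeal to Corollary~\ref{cor:problematicSolved}, but the substance is identical.
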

\begin{proof}
Note that uniform continuity of $f:X\to Y$ implies that for all $\varepsilon\succ0$
there exists $\delta\succ0$ such that $f^{-1}({\bf B}_{\varepsilon}(S))\supseteq{\bf B}_{\delta}(f^{-1}(S))$,
for all $S\subseteq Y$. Now, to show that $(-)_{\succ}$ is functorial,
suppose that $f:(X,\mathcal{F})\to(Y,\mathcal{G})$ is a morphism,
i.e., that $f(\mathcal{F})\supseteq\mathcal{G}$, and we need to show
that $f:(X,\mathcal{F}_{\succ})\to(Y,\mathcal{G}_{\succ})$ is a morphism,
i.e., that $f(\mathcal{F}_{\succ})\supseteq\mathcal{G}_{\succ}$.
Indeed, if $G\in\mathcal{G_{\succ}}$, then $G\supseteq{\bf B}_{\varepsilon}(G')$
for some $G'\in\mathcal{G}$ and $\varepsilon\succ0$. It thus follows
that $f^{-1}(G)\supseteq f^{-1}({\bf B}_{\varepsilon}(G'))\supseteq{\bf B}_{\delta}(f^{-1}(G'))$
for a suitable $\delta\succ0$. Since $f^{-1}(G')\in\mathcal{F}$
we conclude that $f^{-1}(G)\in\mathcal{F}_{\succ}$. The claim about
the image of the functor is Corollary~\ref{cor:problematicSolved}. 
\end{proof}
Note that generally speaking $\mathcal{G}\supseteq\mathcal{G}_{\succ}$
but strict inclusion may hold even if $\mathcal{G}$ is already round.
The fact that for a round filter that is also Cauchy $\mathcal{G}_{\succ}=\mathcal{G}$
(by minimality) is crucial in the following proof.
\begin{prop}
The functor $(-)_{\succ}:{\bf rFil_{V}}\to{\bf rRFil_{V}}$ restricts
to a functor $(-)_{\succ}:{\bf rCFil_{V}\to rCRFil_{V}}$. This functor
is left adjoint to the inclusion functor ${\bf rCRFil_{V}\to{\bf rCFil_{V}}}.$\end{prop}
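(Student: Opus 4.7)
The plan has two parts. First, one verifies that $(-)_{\succ}$ carries $\mathbf{rCFil_V}$ into $\mathbf{rCRFil_V}$, which is essentially bookkeeping with results already in hand. Second, one exhibits the adjunction via a hom-set bijection that is literally the identity on underlying maps, the only delicate input being the identity $\mathcal{G}_{\succ}=\mathcal{G}$ for a Cauchy round $\mathcal{G}$, already flagged in the preamble to the proposition.

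For the restriction, let $(X,\mathcal{F})\in\mathbf{rCFil_V}$. Proposition~\ref{prop:Cauchy implies cauchy} gives that $\mathcal{F}_{\succ}$ is Cauchy, and Lemma~\ref{lem:UBA implies round} gives that $\mathcal{F}_{\succ}$ is round, using both that $X$ has uniformly vanishing asymmetry and that $\mathcal{F}$ is proper. Properness of $\mathcal{F}_{\succ}$ itself is immediate since every generating basis element $\mathbf{B}_{\varepsilon}(F)$ contains the non-empty set $F$, so $\emptyset\notin\mathcal{F}_{\succ}$. Functoriality on morphisms is already Proposition~\ref{prop:Problemaitc}, so the restriction is established.

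For the adjunction with the inclusion $i:\mathbf{rCRFil_V}\to\mathbf{rCFil_V}$, I would take the unit $\eta_{(X,\mathcal{F})}$ at $(X,\mathcal{F})$ to be the identity map on $X$, viewed as a morphism $(X,\mathcal{F})\to(X,\mathcal{F}_{\succ})$. This is a legal morphism in $\mathbf{rCFil_V}$ because each generator $\mathbf{B}_{\varepsilon}(F)$ of $\mathcal{F}_{\succ}$ is a superset of $F\in\mathcal{F}$, so $\mathcal{F}_{\succ}\subseteq\mathcal{F}$ and consequently $\mathrm{id}(\mathcal{F})=\mathcal{F}\supseteq\mathcal{F}_{\succ}$. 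Given any $f:(X,\mathcal{F})\to(Y,\mathcal{G})$ in $\mathbf{rCFil_V}$ with $(Y,\mathcal{G})\in\mathbf{rCRFil_V}$, uniqueness of a factorization through $\eta_{(X,\mathcal{F})}$ is automatic since morphisms are determined by their underlying functions. For existence, Proposition~\ref{prop:Problemaitc} supplies $f(\mathcal{F}_{\succ})\supseteq\mathcal{G}_{\succ}$; combined with the identity $\mathcal{G}_{\succ}=\mathcal{G}$ this yields the desired $f:(X,\mathcal{F}_{\succ})\to(Y,\mathcal{G})$ in $\mathbf{rCRFil_V}$.

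The one step worth singling out, and the main obstacle, is the equality $\mathcal{G}_{\succ}=\mathcal{G}$ for $\mathcal{G}$ Cauchy and round. The inclusion $\mathcal{G}_{\succ}\subseteq\mathcal{G}$ is the same observation used for the unit above, and $\mathcal{G}_{\succ}$ is Cauchy by Proposition~\ref{prop:Cauchy implies cauchy}; since Proposition~\ref{prop:Cauchy plus round implies minimal cauchy} makes $\mathcal{G}$ minimal Cauchy, no strictly smaller Cauchy subfilter can exist and the reverse inclusion follows. With this in place, the adjunction triangle identities are trivial because both unit and counit are identities on underlying sets.
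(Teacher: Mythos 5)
Your proof is correct and follows essentially the same route as the paper: the restriction is assembled from Propositions~\ref{prop:Cauchy implies cauchy} and Lemma~\ref{lem:UBA implies round}, and the adjunction reduces to the hom-set equivalence $f(\mathcal{F})\supseteq\mathcal{G}\iff f(\mathcal{F}_{\succ})\supseteq\mathcal{G}$, hinging on $\mathcal{G}_{\succ}=\mathcal{G}$ for a Cauchy round $\mathcal{G}$. The only (harmless) difference is that you phrase the adjunction via the unit and cite the functoriality argument of Proposition~\ref{prop:Problemaitc} for the nontrivial direction, whereas the paper re-derives that same computation inline.
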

\begin{proof}
The claim about the restriction landing in Cauchy filters is Proposition~\ref{prop:Cauchy implies cauchy}.
To establish that $(-)_{\succ}$ is left adjoint to the inclusion,
we need to show for a Cauchy and round filter $\mathcal{G}$ on $Y$
and an arbitrary Cauchy filter $\mathcal{F}$ on $X$, that $f:(X,\mathcal{F}_{\succ})\to(Y,\mathcal{G})$
is a morphism, i.e., that $f(\mathcal{F}_{\succ})\supseteq\mathcal{G}$,
if, and only if, $f:(X,\mathcal{F})\to(Y,\mathcal{G})$ is a morphism,
i.e., $f(\mathcal{F})\supseteq\mathcal{G}$. Since $\mathcal{F}\supseteq\mathcal{F}_{\succ}$,
it follows that $f(\mathcal{F})\supseteq f(\mathcal{F}_{\succ})$,
and thus one of the implications is trivial. Assume now that $f(\mathcal{F})\supseteq\mathcal{G}$,
and we need to show that $f(\mathcal{F}_{\succ})\supseteq\mathcal{G}$.
Let $G\in\mathcal{G}$. As $\mathcal{G}$ is Cauchy and round, thus
minimal Cauchy, we have that $\mathcal{G}_{\succ}=\mathcal{G}$, and
so there exists some $G'\in\mathcal{G}$ and $\varepsilon\succ0$
with $G\supseteq{\bf B}_{\varepsilon}(G')$. Then $f^{-1}(G)\supseteq f^{-1}({\bf B}_{\varepsilon}(G'))\supseteq{\bf B}_{\delta}(f^{-1}(G'))$
for some $\delta\succ0$, and since $f^{-1}(G')\in\mathcal{F}$, we
conclude that $f^{-1}(G)\in\mathcal{F}_{\succ}$. 
\end{proof}
This concludes the description of the functors in the diagram above.
We now turn to the details of the completion construction. For $x\in X$
let $\mathcal{F}_{x}$ be the filter generated by the filter base
$\mathcal{B}_{x}=\{{\bf B}_{\varepsilon}(x)\mid\varepsilon\succ0\}$,
which is clearly Cauchy. The dual construction is the filter $\mathcal{F}^{x}$
generated by the filter base $\mathcal{B}^{x}=\{{\bf B}^{\varepsilon}(x)\mid\varepsilon\succ0\}$.
For general $V$-spaces, a filter may be Cauchy without being op-Cauchy
and $\mathcal{F}_{x}=\mathcal{F}^{x}$ need not hold. 
\begin{prop}
\label{prop:compiffopcomp}If $X$ has vanishing asymmetry, then $\mathcal{F}_{x}=\mathcal{F}^{x}$
for all $x\in X$. If $X$ has uniformly vanishing asymmetry, then
a filter $\mathcal{F}$ is Cauchy if, and only if, it is op-Cauchy.
Consequently, $X$ is Cauchy complete if, and only if, it is op-Cauchy
complete.\end{prop}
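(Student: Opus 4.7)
The plan is to prove the three assertions in sequence, each being a direct unwinding of the relevant symmetry hypothesis. For the first claim, I would compare the generating bases $\mathcal{B}_x=\{\mathbf{B}_\varepsilon(x)\mid\varepsilon\succ 0\}$ and $\mathcal{B}^x=\{\mathbf{B}^\varepsilon(x)\mid\varepsilon\succ 0\}$ directly: given a basic $\mathbf{B}_\varepsilon(x)$, the pointwise vanishing asymmetry condition at $x$ supplies $\delta\succ 0$ with $\mathbf{B}^\delta(x)\subseteq\mathbf{B}_\varepsilon(x)$, so $\mathbf{B}_\varepsilon(x)\in\mathcal{F}^x$, and the symmetric argument gives $\mathcal{F}^x\subseteq\mathcal{F}_x$. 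The point is that vanishing asymmetry is exactly the pointwise statement matched to these pointwise generating bases.

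For the second claim, I would first note that the defining condition for uniform vanishing asymmetry, namely ``for all $\varepsilon\succ 0$ there is $\delta\succ 0$ with $d(y,x)\le\delta\Rightarrow d(x,y)\le\varepsilon$'', is universally quantified in $x$ and $y$, so renaming those variables yields both $\mathbf{B}_\delta(x)\subseteq\mathbf{B}^\varepsilon(x)$ and $\mathbf{B}^\delta(x)\subseteq\mathbf{B}_\varepsilon(x)$ simultaneously for every $x\in X$. Given a Cauchy filter $\mathcal{F}$ and $\varepsilon\succ 0$, pick such a $\delta$, choose a witness $x$ with $\mathbf{B}_\delta(x)\in\mathcal{F}$, and observe that the superset $\mathbf{B}^\varepsilon(x)$ lies in $\mathcal{F}$ as well, witnessing op-Cauchy. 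The converse is identical after swapping the two kinds of ball.

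For the consequence, assume $X$ is Cauchy complete and let $\mathcal{F}$ be a proper op-Cauchy filter. By the second claim $\mathcal{F}$ is Cauchy, so $\mathcal{F}\to x$ for some $x\in X$, and hence every $\mathbf{B}_\varepsilon(x)$ lies in $\mathcal{F}$. To promote this to op-convergence, given $\varepsilon\succ 0$ I would invoke the uniform modulus once more to find $\delta$ with $\mathbf{B}_\delta(x)\subseteq\mathbf{B}^\varepsilon(x)$, whence $\mathbf{B}^\varepsilon(x)\in\mathcal{F}$; the reverse implication is symmetric. The argument is mechanical throughout, and I foresee no genuine obstacle beyond carefully tracking the direction of the ball containments in each step, together with the one structural observation that a single-direction implication in the definition of uniform vanishing asymmetry already secures both inclusions uniformly in the centre of the ball.
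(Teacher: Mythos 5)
Your proposal is correct and follows essentially the same route as the paper: compare the generating bases $\mathcal{B}_x$ and $\mathcal{B}^x$ via a pointwise modulus for the first claim, transfer the Cauchy witness along ${\bf B}_{\delta}(x)\subseteq{\bf B}^{\varepsilon}(x)$ for the second, and deduce the equivalence of the two completeness notions from the equivalence of convergence and op-convergence to a given point. The only cosmetic difference is that in the last step you re-invoke the uniform modulus directly where the paper simply cites $\mathcal{F}_{x}=\mathcal{F}^{x}$ from the first claim.
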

\begin{proof}
To show that $\mathcal{F}_{x}=\mathcal{F}^{x}$ it suffices to argue
on basis elements. If $X$ has vanishing asymmetry, then given ${\bf B}_{\varepsilon}(x)\in\mathcal{B}_{x}$
let $\delta\succ0$ be such that ${\bf B}^{\delta}(x)\subseteq{\bf B}_{\varepsilon}(x)$,
which thus shows that ${\bf B}_{\varepsilon}(x)\in\mathcal{F}^{x}$ ,
and so $\mathcal{F}_{x}\subseteq\mathcal{F}^{x}$. The reverse inequality
follows similarly. Suppose now that $X$ has uniformly vanishing asymmetry
and that $\mathcal{F}$ is Cauchy. Given $\varepsilon\succ0$, let
$\delta\succ0$ be a corresponding modulus of uniform symmetry. There
is then $x\in X$ with ${\bf B}_{\delta}(x)\in\mathcal{F}$, and since
${\bf B}_{\delta}(x)\subseteq{\bf B}^{\varepsilon}(x)$, it follows that
${\bf B}^{\varepsilon}(x)\in\mathcal{F}$, and so $\mathcal{F}$ is op-Cauchy.
The reverse implication is similar. The last assertion in the proposition
follows since $\mathcal{F}\to x$ is equivalent to $\mathcal{F}_{x}\subseteq\mathcal{F}$,
and $\mathcal{F}\to^{op}x$ is equivalent to $\mathcal{F}^{x}\subseteq\mathcal{F}$. \end{proof}
\begin{prop}
If $X$ has uniformly vanishing asymmetry, then $\mathcal{F}_{x}$
is round. \end{prop}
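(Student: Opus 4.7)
The plan is to reduce the claim to Lemma~\ref{lem:UBA implies round} by establishing that $\mathcal{F}_x$ coincides with its own roundification $(\mathcal{F}_x)_{\succ}$. The inclusion $\mathcal{F}_x \supseteq (\mathcal{F}_x)_{\succ}$ is formal and requires no hypothesis on $X$: every generating element $\mathbf{B}_{\varepsilon}(F)$ of $(\mathcal{F}_x)_{\succ}$ contains $F$, and since $F \in \mathcal{F}_x$ this forces $\mathbf{B}_{\varepsilon}(F) \in \mathcal{F}_x$. (This is exactly the observation recorded in the paragraph preceding the proposition that $\mathcal{G} \supseteq \mathcal{G}_{\succ}$ in general.)

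For the reverse inclusion, it suffices to show that each basis element $\mathbf{B}_{\varepsilon}(x)$ of $\mathcal{F}_x$ lies in $(\mathcal{F}_x)_{\succ}$. Picking $\delta \succ 0$ with $2 \cdot \delta \le \varepsilon$ (available by the standard value quantale property listed in the preliminaries), the triangle inequality gives $\mathbf{B}_{\delta}(\mathbf{B}_{\delta}(x)) \subseteq \mathbf{B}_{2 \cdot \delta}(x) \subseteq \mathbf{B}_{\varepsilon}(x)$. Since $\mathbf{B}_{\delta}(x) \in \mathcal{F}_x$, the set $\mathbf{B}_{\delta}(\mathbf{B}_{\delta}(x))$ is a basis element of $(\mathcal{F}_x)_{\succ}$, and hence $\mathbf{B}_{\varepsilon}(x) \in (\mathcal{F}_x)_{\succ}$. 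This step is essentially the computation used in the proof of Proposition~\ref{prop:Cauchy implies cauchy} and by itself does not invoke uniformly vanishing asymmetry.

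Having established $\mathcal{F}_x = (\mathcal{F}_x)_{\succ}$, and observing that $\mathcal{F}_x \ne \mathcal{P}(X)$ since $x$ belongs to every basis element $\mathbf{B}_{\varepsilon}(x)$ and so $\emptyset \notin \mathcal{F}_x$, Lemma~\ref{lem:UBA implies round} applies to conclude that $(\mathcal{F}_x)_{\succ}$, and hence $\mathcal{F}_x$, is round. Thus the uniformly vanishing asymmetry hypothesis is spent entirely through Lemma~\ref{lem:UBA implies round}, and I do not anticipate any genuine obstacle: the real content here is the small lemma that principal filters generated by $\varepsilon$-balls around a single point are already fattening-saturated, so that the general roundification result of Lemma~\ref{lem:UBA implies round} specializes to give roundness of $\mathcal{F}_x$ on the nose.
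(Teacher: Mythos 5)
Your proof is correct, but it takes a genuinely different route from the paper. The paper proves the proposition by a direct verification that mirrors the argument of Lemma~\ref{lem:UBA implies round}: given a basis element $\mathbf{B}_{\varepsilon}(x)$, it picks $\delta_{1}\succ0$ with $2\cdot\delta_{1}\le\varepsilon$, a uniform modulus of symmetry $\delta_{2}$ for $\delta_{1}$, sets $\delta=\delta_{1}\wedge\delta_{2}$, and checks directly that $\mathbf{B}_{\delta}(y)\in\mathcal{F}_{x}$ forces $x\in\mathbf{B}_{\delta}(y)$, hence $d(x,y)\le\delta_{1}$ and $\mathbf{B}_{\delta}(y)\subseteq\mathbf{B}_{\varepsilon}(x)$ by the triangle inequality. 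You instead prove the fixed-point identity $\mathcal{F}_{x}=(\mathcal{F}_{x})_{\succ}$ (both inclusions are elementary and, as you note, independent of uniformly vanishing asymmetry --- the reverse inclusion being exactly the $\mathbf{B}_{\delta}(\mathbf{B}_{\delta}(x))\subseteq\mathbf{B}_{2\cdot\delta}(x)$ computation from Proposition~\ref{prop:Cauchy implies cauchy}), check that $\mathcal{F}_{x}$ is proper since $x$ lies in every basis element, and then let Lemma~\ref{lem:UBA implies round} do all the work. Each step checks out, including the transfer of roundness along the equality of filters. What your route buys is that the $\delta$-juggling with the modulus of symmetry is not repeated; what the paper's route buys is a self-contained argument within the proposition. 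It is worth noting that the paper itself gestures at your reduction in the parenthetical remark that $\mathcal{F}_{x}=(\mathcal{P}_{x})_{\succ}$ for $\mathcal{P}_{x}$ the principal filter at $x$, which combined with Lemma~\ref{lem:UBA implies round} gives an even shorter version of your argument (the principal filter is trivially proper and no fixed-point identity is needed).
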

\begin{proof}
Given ${\bf B}_{\varepsilon}(x)\in\mathcal{F}_{x}$, let $\delta_{1}\succ0$
with $2\cdot\delta_{1}\le\varepsilon$ and let $\delta_{2}\succ0$
be a uniform modulus of symmetry for $\delta_{1}$. Set $\delta=\delta_{1}\wedge\delta_{2}$
and suppose ${\bf B}_{\delta}(y)\in\mathcal{F}_{x}$. Then clearly
$x\in{\bf B}_{\delta}(y)$, thus $d(y,x)\le\delta$, implying that
$d(x,y)\le\delta_{1}$. Now, to show that ${\bf B}_{\delta}(y)\subseteq{\bf B}_{\varepsilon}(x)$,
notice that if $z\in{\bf B}_{\delta}(y)$, then $d(x,z)\le d(x,y)+d(y,z)\le\delta_{1}+\delta_{1}\le\varepsilon$,
and so $z\in{\bf B}_{\varepsilon}(x)$. 
\end{proof}
For subsets $S,T\subseteq X$, let $d(S,T)=\bigwedge_{s\in S,t\in T}d(s,t)$,
and for collections $\mathcal{E}_{1},\mathcal{E}_{2}\subseteq\mathcal{P}(X)$,
let $d(\mathcal{E}_{1},\mathcal{E}_{2})=\bigvee_{S\in\mathcal{E}_{1},T\in\mathcal{E}_{2}}d(S,T)$,
giving rise to a function $d:\mathcal{P}(\mathcal{P}(X))\times\mathcal{P}(\mathcal{P}(X))\to V$.
Let $\tilde{X}\subseteq\mathcal{P}(\mathcal{P}(X))$ be the set of
all proper (i.e., $\mathcal{P}(X)$ is excluded) Cauchy filters on
$X$ and let $\hat{X}\subseteq\tilde{X}$ be the set of all minimal
Cauchy filters. It is easy to see that if $\mathcal{B}_{\mathcal{F}}$
and $\mathcal{B}_{\mathcal{G}}$ are filter bases for $\mathcal{F}$
and $\mathcal{G}$ respectively, then $d(\mathcal{F},\mathcal{G})=d(\mathcal{B}_{\mathcal{F}},\mathcal{B}_{\mathcal{G}})$.
(Alternatively, notice that $\mathcal{F}_{x}$ is $(\mathcal{P}_{x})_{\succ}$,
where $\mathcal{P}_{x}$ is the principal filter on $x$.)

The following computation is convenient to record for the proofs below. 
\begin{prop}
\label{prop:conv}Suppose that $S\subseteq{\bf B}^{\delta}(x)$ and
$T\subseteq{\bf B}_{\varepsilon}(y)$ and $S, T \ne\emptyset$.
Then $d(S,T)\le d(x,y)+\delta+\varepsilon$ and $d(y,x)\le d(T,S)+\delta+\varepsilon$.\end{prop}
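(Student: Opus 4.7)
The plan is to prove both inequalities by applying the triangle inequality twice in each case and then using the distributivity of $+$ over meets in the value quantale $V$ (the axiom $x+\bigwedge S=\bigwedge(x+S)$, together with commutativity of $+$) to push the resulting meet out. The non-emptiness hypotheses on $S$ and $T$ are what keep the meet $d(S,T)=\bigwedge_{s\in S,t\in T}d(s,t)$ from collapsing to the top element $\infty$.

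For the first inequality, I fix arbitrary $s\in S$ and $t\in T$. Unpacking the definitions of ${\bf B}^{\delta}(x)$ and ${\bf B}_{\varepsilon}(y)$ yields $d(s,x)\le\delta$ and $d(y,t)\le\varepsilon$. Two applications of the triangle inequality give
\[
d(s,t)\le d(s,x)+d(x,y)+d(y,t)\le\delta+d(x,y)+\varepsilon.
\]
Since this holds for every such $s,t$, taking the meet over $S\times T$ and using that $S,T$ are non-empty yields $d(S,T)\le d(x,y)+\delta+\varepsilon$.

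For the second inequality I reverse the role of the endpoints: for any $s\in S$ and $t\in T$ the triangle inequality gives
\[
d(y,x)\le d(y,t)+d(t,s)+d(s,x)\le\varepsilon+d(t,s)+\delta.
\]
Here $d(y,x)$ does not depend on the choice of $s$ or $t$, so this is an inequality between a constant and a function of $s,t$. Taking the meet over $(t,s)\in T\times S$ on the right and invoking the quantale axiom $a+\bigwedge \Sigma=\bigwedge(a+\Sigma)$ (applied on both sides of $d(t,s)$, using commutativity of $+$) lets me pull the meet through the additions to obtain
\[
d(y,x)\le\varepsilon+\Bigl(\bigwedge_{t\in T,\,s\in S}d(t,s)\Bigr)+\delta=d(T,S)+\delta+\varepsilon,
\]
which is the desired bound.

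There is no real obstacle here; the only point worth flagging is the need to invoke the meet-preservation axiom of the value quantale to move $\bigwedge$ past $+$, and the need for $S,T\ne\emptyset$ so that the indexed meets are taken over non-empty sets and so actually reflect the pointwise bounds derived from the triangle inequality.
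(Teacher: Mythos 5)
Your proof is correct and follows essentially the same route as the paper's: the same double application of the triangle inequality through $x$ and $y$ for each inequality, and the same use of the meet-preservation (distributivity) axiom to pass from the pointwise bound $d(y,x)\le\varepsilon+d(t,s)+\delta$ to the bound involving $d(T,S)$. Your explicit remark on where non-emptiness of $S$ and $T$ enters is a small clarification the paper leaves implicit, but the argument is the same.
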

\begin{proof}
Let $s\in S$ and $t\in T$ be arbitrary. Then $d(S,T)\le d(s,t)\le d(s,x)+d(x,y)+d(y,t)\le\delta+d(x,y)+\varepsilon$,
which is the first inequality. By the distributivity law in $V$,
the second inequality will follow by showing that $d(y,x)\le d(t,s)+\delta+\varepsilon$
for all $s\in S$ and $t\in T$. Indeed, $d(y,x)\le d(y,t)+d(t,s)+d(s,x)\le\varepsilon+d(t,s)+\delta$. \end{proof}
\begin{lem}
If $X$ has uniformly vanishing asymmetry, then $(\tilde{X},d)$ is
a $V$-space, which itself has uniformly vanishing asymmetry. \end{lem}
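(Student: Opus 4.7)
The plan is to verify the two $V$-space axioms for $(\tilde{X},d)$ and then establish uniformly vanishing asymmetry, drawing on properness, Cauchyness, and the UVA of $X$ in different ways at each step.

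First, the reflexivity axiom $d(\mathcal{F},\mathcal{F})=0$ is essentially just properness. For any $F,F'\in\mathcal{F}$ the intersection $F\cap F'$ lies in $\mathcal{F}$ and is non-empty (because $\mathcal{F}\ne\mathcal{P}(X)$ excludes $\emptyset$). Picking any $y\in F\cap F'$ witnesses $d(F,F')\le d(y,y)=0$, so every term of the supremum defining $d(\mathcal{F},\mathcal{F})$ is $0$.

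The triangle inequality is the main obstacle. Using $a\le b\iff a\le b+\varepsilon$ for all $\varepsilon\succ 0$, it suffices to show, for fixed $F\in\mathcal{F}$, $H\in\mathcal{H}$, and $\varepsilon\succ 0$, that $d(F,H)\le d(\mathcal{F},\mathcal{G})+d(\mathcal{G},\mathcal{H})+\varepsilon$. The strategy is to use the Cauchy property of the middle filter $\mathcal{G}$ to locate a common ``centre point'' $y\in X$ and to use UVA to bridge the two halves. Concretely, choose $\eta\succ 0$ together with $\eta'\succ 0$ so that $\eta$ is a uniform modulus of symmetry for $\eta'$ and $\eta+\eta'\le\varepsilon/2$, and invoke Cauchyness of $\mathcal{G}$ to produce $y\in X$ with $G_0:={\bf B}_{\eta}(y)\in\mathcal{G}$. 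Since the pair $(F,G_0)$ contributes to the supremum, $d(F,G_0)\le d(\mathcal{F},\mathcal{G})$, and the infimum characterisation (inherited from the axiom $a=\bigwedge\{a+\varepsilon\mid\varepsilon\succ 0\}$) yields $f\in F$ and $g\in G_0$ with $d(f,g)\le d(\mathcal{F},\mathcal{G})+\varepsilon/4$, and similarly $g'\in G_0$ and $h\in H$ with $d(g',h)\le d(\mathcal{G},\mathcal{H})+\varepsilon/4$. Here UVA does the decisive work: both $g$ and $g'$ lie in $G_0$, so $d(y,g)\le\eta$ and $d(y,g')\le\eta$, and UVA converts $d(y,g)\le\eta$ into $d(g,y)\le\eta'$, whence $d(g,g')\le d(g,y)+d(y,g')\le\eta'+\eta\le\varepsilon/2$. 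Three applications of the triangle inequality in $X$ now deliver $d(f,h)\le d(\mathcal{F},\mathcal{G})+d(\mathcal{G},\mathcal{H})+\varepsilon$, and since $d(F,H)\le d(f,h)$ this completes the estimate.

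Finally, UVA of $\tilde X$ is comparatively routine and reduces to a pointwise translation of UVA in $X$. Given $\varepsilon\succ 0$, let $\delta_0\succ 0$ be a uniform modulus of symmetry for $\varepsilon$ in $X$ and pick $\delta\succ 0$ with $2\delta\le\delta_0$. If $d(\mathcal{G},\mathcal{F})\le\delta$, then $d(G,F)\le\delta$ for every $F\in\mathcal{F}$ and $G\in\mathcal{G}$. Applying the infimum characterisation as above (with $\varepsilon''=\delta$) produces $g\in G$, $f\in F$ with $d(g,f)\le 2\delta\le\delta_0$, which UVA in $X$ converts into $d(f,g)\le\varepsilon$, and hence $d(F,G)\le\varepsilon$. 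Taking the supremum over $F,G$ gives $d(\mathcal{F},\mathcal{G})\le\varepsilon$, so $\delta$ is a uniform modulus of symmetry at the filter level. Note that this step does not use Cauchyness at all; the Cauchy hypothesis is really only needed in the triangle inequality, which is where the proof sheds its routine character.
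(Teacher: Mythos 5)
Your overall strategy for the triangle inequality is the same as the paper's: use Cauchyness of the middle filter $\mathcal{G}$ to place a small ball $\mathbf{B}_{\eta}(y)$ in $\mathcal{G}$, and use uniformly vanishing asymmetry to reverse one distance inside that ball so that the two halves can be chained through $y$. The reflexivity argument is also the paper's. However, there is one step you should not pass over so quickly: twice in the triangle inequality and once in the symmetry argument you ``apply the infimum characterisation'' to extract $f\in F$, $g\in G_0$ with $d(f,g)\le d(F,G_0)+\varepsilon'$. In an arbitrary complete lattice an infimum need not be approximately attained by members of the indexing set, and the axiom you cite, $a=\bigwedge\{a+\varepsilon\mid\varepsilon\succ 0\}$, does not deliver this. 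What you actually need is that $a+\varepsilon$ is \emph{well above} $a$ whenever $\varepsilon\succ 0$ (so that the defining property of $\succ$ lets you select an element of the family below $a+\varepsilon$). This is a true fact about value quantales --- it is essentially Flagg's characterisation of the well-above relation in terms of $+$ --- but it is not among the preliminaries listed in Section 2, and your proof stands or falls on it. The paper's own computation deliberately avoids any such selection: it inserts $d(y,x)+d(x,z)\le 2\cdot\delta$ under the meet and uses the distributivity axiom $x+\bigwedge S=\bigwedge(x+S)$ directly, which is why no near-optimal pair ever has to be produced. Relatedly, the expressions $\varepsilon/2$ and $\varepsilon/4$ do not exist in a general value quantale; they must be replaced by chosen elements $\varepsilon_1,\varepsilon_2\succ 0$ with $2\cdot\varepsilon_1+\varepsilon_2\le\varepsilon$, which the halving property supplies.

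Granting the selection principle, your treatment of uniformly vanishing asymmetry for $\tilde{X}$ is a genuine (and nicer) simplification: from $d(G,F)\le\delta$ you extract a single pair $g\in G$, $f\in F$ with $d(g,f)\le 2\cdot\delta\le\delta_0$, flip it by UVA in $X$, and conclude $d(F,G)\le d(f,g)\le\varepsilon$, with no use of Cauchyness. The paper instead avoids selection by intersecting $F_0$ and $G_0$ with small balls $\mathbf{B}^{\delta'}(x)\in\mathcal{F}$ and $\mathbf{B}_{\delta'}(y)\in\mathcal{G}$ (this is where Cauchyness and op-Cauchyness enter) and estimating via Proposition~\ref{prop:conv}. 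Your route buys a shorter argument that applies to arbitrary proper filters; the paper's buys independence from the approximate-attainment fact. Either is acceptable once that fact is properly sourced.
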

\begin{proof}
$d(\mathcal{F},\mathcal{F})=0$ since for all $F,F'\in\mathcal{F}$,
$F\cap F'\ne\emptyset$. To establish that $d(\mathcal{F},\mathcal{H})\le d(\mathcal{F},\mathcal{G})+d(\mathcal{G},\mathcal{H})$
it suffices to show, for fixed $\varepsilon\succ0$, $F\in\mathcal{F}$,
and $H\in\mathcal{H}$, that there exists $G\in\mathcal{G}$ such
that $d(F,H)\le d(F,G)+d(G,H)+\varepsilon$. Let $\delta\succ0$ be
such that $2\cdot\delta\le\varepsilon$ and let $\delta'\succ0$ be
a uniform modulus of symmetry for $\delta$, and set $\eta=\delta\wedge\delta'$.
As $\mathcal{G}$ is Cauchy, there is $x\in X$ such that $G={\bf B}_{\eta}(x)\in\mathcal{G}$.
And then 
\begin{eqnarray*}
d(F,G)+d(G,H)+\varepsilon & \ge & \bigwedge_{f\in F,y,z\in G,h\in H}d(f,y)+d(z,h)+2\cdot\delta\\
 & \ge & \bigwedge_{f\in F,y,z\in G,h\in H}d(f,y)+d(y,x)+d(x,z)+d(z,h)\\
 & \ge & \bigwedge_{f\in F,h\in H}d(f,h)=d(F,H)
\end{eqnarray*}
as required for showing that $\tilde{X}$ is a $V$-space. 

To show that $\tilde{X}$ has uniformly vanishing asymmetry, let $\varepsilon\succ0$
be given and let $\eta\succ0$ with $2\cdot\eta\le\varepsilon$. Let
$\delta_{1}\succ0$ be a uniform modulus of symmetry for $\eta$, and
$\delta\succ0$ with $2\cdot\delta\le\delta_{1}$. Suppose that $d(\mathcal{G},\mathcal{F})\le\delta$,
which means that $d(G,F)\le\delta$ for all $F\in\mathcal{F}$ and
$G\in\mathcal{G}$. To show that $d(\mathcal{F},\mathcal{G})\le\varepsilon$
it suffices to show that $d(F_{0},G_{0})\le\varepsilon$ for fixed
$F_{0}\in\mathcal{F}$ and $G_{0}\in\mathcal{G}$. Since $\mathcal{F}$
is Cauchy (and thus op-Cauchy) and since $\mathcal{G}$ is Cauchy,
there exist $x,y\in X$ with ${\bf B}^{\delta'}(x)\in\mathcal{F}$
and ${\bf B}_{\delta'}(y)\in\mathcal{G}$, where $\delta'\succ0$
satisfies $2\cdot\delta'\le\delta$. Let $S=F_{0}\cap{\bf B}^{\delta'}(x)$
and $T=G_{0}\cap{\bf B}_{\delta'}(y)$. Then, using Proposition~\ref{prop:conv}
(here and in the following computation), $d(y,x)\le d(T,S)+\delta\le2\cdot\delta\le\delta_{1}$
and thus $d(x,y)\le\eta$. Finally, $d(F_{0,}G_{0})\le d(S,T)\le d(x,y)+\eta\le2\cdot\eta\le\varepsilon$,
as required.
\end{proof}
For any $V$-space, setting $x\sim y$ whenever $d(x,y)=d(y,x)=0$
is an equivalence relation, and $X_{0}$, the set of equivalence classes
becomes a separated $V$-space where the distance function is given
by $d([x],[y])=d(x,y)$. We note that if $X$ has vanishing asymmetry,
then $d(x,y)=0$ implies $d(y,x)=0$ and if $X$ is also separated,
then $\mathcal{O}(X)$ is Hausdorff. In particular, the following
result (whose proof is immediate and thus omitted), implies that if
$X$ has vanishing asymmetry, then $X_{0}$ is Hausdorff.
\begin{prop}
If $X$ has (uniformly) vanishing asymmetry, then so does $X_{0}$.\end{prop}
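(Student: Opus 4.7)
The plan is to exploit the fact that, by the definition $d([x],[y]) = d(x,y)$, the quotient map $q\colon X \to X_0$ is an isometry, so that every modulus of (uniform) symmetry in $X$ transports directly to $X_0$. In particular, a ball in $X_0$ is nothing other than the image under $q$ of the corresponding ball in $X$: one has $\mathbf{B}_\varepsilon([x]) = \{[y] \mid d(x,y)\le\varepsilon\}$ and $\mathbf{B}^\varepsilon([x]) = \{[y] \mid d(y,x)\le\varepsilon\}$, so membership in a ball at $[x]$ is governed entirely by the representative distances.

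First I would handle the uniformly vanishing asymmetry case, which is the cleaner of the two. Given $\varepsilon \succ 0$, let $\delta \succ 0$ be a uniform modulus of symmetry for $\varepsilon$ in $X$. If $[x], [y] \in X_0$ satisfy $d([y],[x]) \le \delta$, then by the definition of the quotient distance $d(y,x) \le \delta$, so $d(x,y) \le \varepsilon$ by the chosen modulus, and hence $d([x],[y]) \le \varepsilon$. Thus $\delta$ is a uniform modulus of symmetry for $\varepsilon$ in $X_0$.

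Next I would handle the pointwise (not necessarily uniform) case. Fix $[x] \in X_0$ and $\varepsilon \succ 0$, and let $\delta \succ 0$ be a modulus of symmetry for $x$ and $\varepsilon$ in $X$, i.e., $\mathbf{B}^\delta(x) \subseteq \mathbf{B}_\varepsilon(x)$ and $\mathbf{B}_\delta(x) \subseteq \mathbf{B}^\varepsilon(x)$. If $[y] \in \mathbf{B}^\delta([x])$, then $d(y,x) \le \delta$, so $y \in \mathbf{B}^\delta(x) \subseteq \mathbf{B}_\varepsilon(x)$, giving $d(x,y) \le \varepsilon$ and thus $[y] \in \mathbf{B}_\varepsilon([x])$. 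The symmetric containment is proved identically. Hence the same $\delta$ works for $[x]$ in $X_0$.

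There is essentially no obstacle here beyond bookkeeping: the content of the proposition is that the quotient construction does not destroy the symmetry conditions, and since distances are literally preserved under $q$, each modulus chosen upstairs works downstairs verbatim. The only point worth flagging is that one must use the \emph{same representative} $x$ both to locate the ball upstairs and to witness the modulus downstairs, which is legitimate because $\sim$-equivalent points have the same balls of every radius (indeed, if $x \sim x'$ then $\mathbf{B}_\varepsilon(x) = \mathbf{B}_\varepsilon(x')$ and $\mathbf{B}^\varepsilon(x) = \mathbf{B}^\varepsilon(x')$ by the triangle inequality and $d(x,x')=d(x',x)=0$).
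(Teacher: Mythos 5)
Your proof is correct; the paper omits the proof as immediate, and your argument (transporting each modulus of symmetry along the distance-preserving quotient map, using that $d([x],[y])=d(x,y)$ is well-defined) is precisely the intended immediate verification. The closing remark about representative-independence of balls is a harmless extra: since you only need to exhibit \emph{some} $\delta$ for each $[x]$, choosing any representative and its modulus already suffices.
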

\begin{thm}
If $X$ has uniformly vanishing asymmetry, then $\hat{X}$ is isometric
to $\tilde{X}_{0}$.\end{thm}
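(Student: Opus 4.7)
The plan is to show that the natural map $\psi : \hat{X} \to \tilde{X}_0$, $\mathcal{M} \mapsto [\mathcal{M}]$, is a bijective isometry. Since $\hat{X}$ is a sub-$V$-space of $\tilde{X}$ and the distance in $\tilde{X}_0$ is defined by $d([\mathcal{F}], [\mathcal{G}]) = d(\mathcal{F}, \mathcal{G})$, the map $\psi$ is tautologically distance-preserving, so the entire content of the theorem lies in establishing that $\psi$ is a bijection.

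For surjectivity, I would argue that every class $[\mathcal{F}] \in \tilde{X}_0$ equals $[\mathcal{F}_\succ]$. By Corollary~\ref{cor:problematicSolved} the filter $\mathcal{F}_\succ$ is minimal Cauchy, hence lies in $\hat{X}$. Moreover, since the generating basis of $\mathcal{F}_\succ$ consists of sets ${\bf B}_\varepsilon(F)$ which are supersets of elements of $\mathcal{F}$, we have $\mathcal{F}_\succ \subseteq \mathcal{F}$, so any $F \in \mathcal{F}$ and $G \in \mathcal{F}_\succ$ satisfy $F \cap G \ne \emptyset$; this forces $d(F, G) = 0$ and therefore $d(\mathcal{F}, \mathcal{F}_\succ) = d(\mathcal{F}_\succ, \mathcal{F}) = 0$, giving $\mathcal{F} \sim \mathcal{F}_\succ$ as needed.

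For injectivity, suppose $\mathcal{M}, \mathcal{N} \in \hat{X}$ satisfy $d(\mathcal{M}, \mathcal{N}) = d(\mathcal{N}, \mathcal{M}) = 0$. Since $\mathcal{M}$ and $\mathcal{N}$ are round, $\mathcal{M} = \mathcal{M}_\succ$ and $\mathcal{N} = \mathcal{N}_\succ$, so it suffices to prove the inclusion $\mathcal{M}_\succ \subseteq \mathcal{N}_\succ$; the reverse is entirely symmetric. Fix a basis element ${\bf B}_\varepsilon(M)$ of $\mathcal{M}_\succ$. The plan is to choose $\alpha \succ 0$ with $4 \cdot \alpha \le \varepsilon$, a uniform modulus of symmetry $\beta$ for $\alpha$, and to set $\gamma = \alpha \wedge \beta$; use the Cauchy property of $\mathcal{N}$ to produce $y \in X$ with ${\bf B}_\gamma(y) \in \mathcal{N}$; use $d(M, {\bf B}_\gamma(y)) \le d(\mathcal{M}, \mathcal{N}) = 0$ to extract $m \in M$ and $z \in {\bf B}_\gamma(y)$ with $d(m, z) \le \gamma$; invoke uniformly vanishing asymmetry to convert $d(y, z) \le \gamma \le \beta$ into $d(z, y) \le \alpha$, yielding $d(m, y) \le 2 \cdot \alpha$; and finally, by the triangle inequality, check that ${\bf B}_\alpha({\bf B}_\gamma(y)) \subseteq {\bf B}_\varepsilon(M)$. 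Since ${\bf B}_\alpha({\bf B}_\gamma(y)) \in \mathcal{N}_\succ$, this places ${\bf B}_\varepsilon(M)$ in $\mathcal{N}_\succ$ as required.

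The main obstacle is precisely this last estimate: the Cauchy approximation in $\mathcal{N}$, the symmetrization furnished by the uniform modulus of symmetry, and the vanishing hypothesis $d(\mathcal{M}, \mathcal{N}) = 0$ must be choreographed through the choice of moduli so as to insert a small fattened basis element of $\mathcal{N}$ inside a prescribed fattened basis element of $\mathcal{M}$. The structure mirrors the proof of Lemma~\ref{lem:UBA implies round} and essentially reuses the same triangle-inequality bookkeeping.
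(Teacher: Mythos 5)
Your proof is correct. The skeleton is the same as the paper's (the bijection $\hat{X}\leftrightarrow\tilde{X}_{0}$ realized by $\mathcal{F}\mapsto[\mathcal{F}]$ and $[\mathcal{F}]\mapsto\mathcal{F}_{\succ}$, with surjectivity coming from $d(\mathcal{F},\mathcal{F}_{\succ})=0$ via $\mathcal{F}_{\succ}\subseteq\mathcal{F}$), but your injectivity step is proved by a genuinely different mechanism. The paper first establishes a structural lemma: if $d(\mathcal{F},\mathcal{G})=0$ for two Cauchy filters then $\mathcal{F}\cap\mathcal{G}$ is again Cauchy; since $\mathcal{F}\cap\mathcal{G}$ is a Cauchy subfilter of each, minimality of $\mathcal{M}$ and $\mathcal{N}$ immediately forces $\mathcal{M}=\mathcal{M}\cap\mathcal{N}=\mathcal{N}$, and all the $\varepsilon$--$\delta$ bookkeeping is confined to that one lemma. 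You instead prove the inclusion $\mathcal{M}_{\succ}\subseteq\mathcal{N}_{\succ}$ directly, inserting a fattened Cauchy ball of $\mathcal{N}$ inside a prescribed $\mathbf{B}_{\varepsilon}(M)$; your choice of moduli ($4\cdot\alpha\le\varepsilon$, $\beta$ a uniform modulus of symmetry for $\alpha$, $\gamma=\alpha\wedge\beta$) does close the estimate: the extraction of $m,z$ with $d(m,z)\le\gamma$ is legitimate since $\gamma\succ 0=d(M,\mathbf{B}_{\gamma}(y))$, and the final chain $d(m,w)\le 2\cdot\alpha+\gamma+\alpha\le 4\cdot\alpha\le\varepsilon$ works. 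What the paper's route buys is a reusable and conceptually cleaner fact (distance-zero Cauchy filters have Cauchy intersection) and a one-line appeal to minimality; what yours buys is that it never needs the notion of minimality at all beyond $\mathcal{M}=\mathcal{M}_{\succ}$, exploiting roundness directly, at the cost of repeating triangle-inequality bookkeeping already present in Lemma~\ref{lem:UBA implies round}. One small presentational advantage of your direction of the map: defining $\psi:\hat{X}\to\tilde{X}_{0}$ makes well-definedness automatic, whereas the paper's $[\mathcal{F}]\mapsto\mathcal{F}_{\succ}$ tacitly relies on the uniqueness of the minimal representative for well-definedness.
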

\begin{proof}
For any two Cauchy filters $\mathcal{F},\mathcal{G}$ on $X$, their
intersection is again a filter but it need not be Cauchy. However,
if $d(\mathcal{F},\mathcal{G})=0$, then $\mathcal{F}\cap\mathcal{G}$
is Cauchy. Indeed, let $\varepsilon\succ0$ and let $\delta_{1}\succ0$
with $2\cdot\delta_{1}\le\varepsilon$. Let $\delta_{2}\succ0$ be
a uniform modulus of symmetry for $\delta_{1}$, and let $\delta_{3}\succ0$
satisfy $2\cdot\delta_{3}\le\delta_{2}$. Set $\delta=\delta_{2}\wedge\delta_{3}$.
There exists $x\in X$ with ${\bf B}_{\delta}(x)\in\mathcal{F}$ and
$y\in X$ with ${\bf B}^{\delta}(y)\in\mathcal{G}$, and since $d(\mathcal{F},\mathcal{G})=0$
it follows that $d({\bf B}_{\delta}(x),{\bf B}^{\delta}(y))=0$, and
thus that $d(x,y)\le2\cdot\delta\le\delta_{1}$. Now, $d(y,s)\le\delta_{1}$
for all $s\in{\bf B}^{\delta}(y)$ and so $d(x,s)\le d(x,y)+d(y,s)\le2\cdot\delta_{1}\le\varepsilon$,
leading to ${\bf B}^{\delta}(y)\subseteq{\bf B}_{\varepsilon}(x)$. It
thus follows that ${\bf B}_{\varepsilon}(x)\in\mathcal{G}$, which establishes
that $\mathcal{F}\cap\mathcal{G}$ is Cauchy. 

It now follows that each equivalence class $[\mathcal{F}]$ contains
a unique minimal Cauchy representative. Indeed, it is easily seen
that $d(\mathcal{F},\mathcal{F}_{\succ})=0$ so that $\mathcal{F}_{\succ}\in[\mathcal{F}]$.
If $\mathcal{F}_{1},\mathcal{F}_{2}$ are two minimal Cauchy filters
with $\mathcal{F}_{1}\sim\mathcal{F}_{2}$, then $\mathcal{F}_{1}\cap\mathcal{F}_{2}$
is Cauchy so that minimality forces $\mathcal{F}_{1}=\mathcal{F}_{2}$.
The bijective isometry $\tilde{X}_{0}\to\hat{X}$ is thus given by
$[\mathcal{F}]\mapsto\mathcal{F}_{\succ}$. \end{proof}
\begin{cor}
If $X$ has uniformly vanishing asymmetry, then $\hat{X}$ is a separated
$V$-space with uniformly vanishing asymmetry. 
\end{cor}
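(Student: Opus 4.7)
The plan is to chain together the results already established in the preceding lemmas, proposition and theorem. Specifically, I would observe that the previous lemma established that $(\tilde{X},d)$ is itself a $V$-space with uniformly vanishing asymmetry. Then I would invoke the proposition asserting that the separated quotient $(-)_{0}$ preserves uniformly vanishing asymmetry, so that $\tilde{X}_{0}$ is a separated $V$-space with uniformly vanishing asymmetry. Finally, by the theorem, $\hat{X}$ is isometric to $\tilde{X}_{0}$, and since being separated and having uniformly vanishing asymmetry are clearly isometry-invariant properties (both can be phrased purely in terms of the distance function), the conclusion transfers to $\hat{X}$.

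The whole argument is a one-line combination of the lemma, the proposition and the theorem, so I expect no real obstacle beyond correctly citing the right intermediate result. I would be careful to spell out explicitly why these two properties are isometry-invariant: separatedness is the statement that $d(x,y) = d(y,x) = 0$ forces $x=y$, which transfers across any bijective isometry; and uniformly vanishing asymmetry is defined by the existence, for every $\varepsilon\succ 0$, of a uniform modulus of symmetry $\delta\succ 0$ in terms of $d$ alone, so it too transfers across bijective isometries. Given these observations the corollary reduces to an immediate consequence and no further computation is needed.
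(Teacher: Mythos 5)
Your proposal is correct and is exactly the intended argument: the paper states this corollary without proof precisely because it follows by chaining the lemma on $\tilde{X}$, the proposition on $(-)_{0}$, and the theorem identifying $\hat{X}$ with $\tilde{X}_{0}$ via a bijective isometry. Your added remark that separatedness and uniformly vanishing asymmetry transfer across bijective isometries is the only detail worth spelling out, and you handle it correctly.
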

Recall that when $X$ has uniformly vanishing asymmetry every the
filters $\mathcal{F}_{x}$ are round (and clearly Cauchy). We then
obtain the function $\iota:X\to\hat{X}$, given by $\iota(x)=\mathcal{F}_{x}$,
called the \emph{canonical embedding }(even though it is injective
if, and only if, $X$ is separated). 
\begin{lem}
\label{lem:canonicalEmbedIsIsom}If $X$ has uniformly vanishing asymmetry,
then the canonical embedding $\iota:X\to\hat{X}$ is an isometry. \end{lem}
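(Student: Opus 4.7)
The plan is to verify $d(\mathcal{F}_x,\mathcal{F}_y)=d(x,y)$ for all $x,y\in X$, by proving the two one-sided estimates separately. Throughout I would work with the natural bases $\mathcal{B}_x=\{{\bf B}_\varepsilon(x)\mid\varepsilon\succ 0\}$ and $\mathcal{B}_y$, exploiting the earlier observation that $d(\mathcal{F},\mathcal{G})=d(\mathcal{B}_\mathcal{F},\mathcal{B}_\mathcal{G})$, so that $d(\mathcal{F}_x,\mathcal{F}_y)$ becomes the supremum over $\varepsilon,\delta\succ 0$ of the infimum $\bigwedge_{s\in{\bf B}_\varepsilon(x),\,t\in{\bf B}_\delta(y)}d(s,t)$.

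The inequality $d(\mathcal{F}_x,\mathcal{F}_y)\le d(x,y)$ is essentially free: since $x\in{\bf B}_\varepsilon(x)$ and $y\in{\bf B}_\delta(y)$ for every $\varepsilon,\delta\succ 0$, the infimum $\bigwedge_{s,t}d(s,t)$ is bounded above by $d(x,y)$, and taking the supremum over $\varepsilon,\delta$ preserves this bound.

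For the reverse inequality I would fix an arbitrary $\varepsilon\succ 0$ and exhibit radii $\delta_1,\delta_2\succ 0$ satisfying $d(x,y)\le d({\bf B}_{\delta_1}(x),{\bf B}_{\delta_2}(y))+\varepsilon$. Concretely, pick $\eta\succ 0$ with $2\cdot\eta\le\varepsilon$, let $\delta_2$ be a uniform modulus of symmetry for $\eta$, and set $\delta_1=\eta$. For any $s\in{\bf B}_{\delta_1}(x)$ and $t\in{\bf B}_{\delta_2}(y)$ the triangle inequality yields $d(x,y)\le d(x,s)+d(s,t)+d(t,y)$; the first summand is at most $\eta$ directly, while for the third the uniform modulus of symmetry converts the given $d(y,t)\le\delta_2$ into $d(t,y)\le\eta$, so altogether $d(x,y)\le d(s,t)+2\cdot\eta\le d(s,t)+\varepsilon$. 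Using the distributivity law $\varepsilon+\bigwedge S=\bigwedge(\varepsilon+S)$ to pull the infimum over $s,t$ outside, I obtain $d(x,y)\le d({\bf B}_{\delta_1}(x),{\bf B}_{\delta_2}(y))+\varepsilon\le d(\mathcal{F}_x,\mathcal{F}_y)+\varepsilon$. Since this holds for every $\varepsilon\succ 0$, the identity $a=\bigwedge\{a+\varepsilon\mid\varepsilon\succ 0\}$ from the value-quantale toolbox forces $d(x,y)\le d(\mathcal{F}_x,\mathcal{F}_y)$, completing the proof.

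The only real obstacle sits in the middle of the lower-bound computation: the ball ${\bf B}_{\delta_2}(y)$ controls $d(y,t)$, whereas the triangle inequality for $d(x,y)$ demands $d(t,y)$. This is precisely the step at which the full strength of uniformly vanishing asymmetry (as opposed to a merely pointwise version) is needed, through a uniform modulus of symmetry that works simultaneously at every point; everything else reduces to standard value-quantale bookkeeping.
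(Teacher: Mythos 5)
Your proof is correct and follows essentially the same route as the paper: the upper bound comes from the centres lying in their own balls, and the lower bound from a single application of a modulus of symmetry to bridge $d(y,t)$ and $d(t,y)$ --- the paper packages that step as the identity $\mathcal{F}_y=\mathcal{F}^y$ and then uses the subtraction adjoint $\square-b$, whereas you apply the modulus inline and finish with distributivity and the $\varepsilon$-characterisation of $\le$. One small caveat: your closing remark overstates matters, since for this lemma a pointwise modulus of symmetry at $y$ (i.e.\ mere vanishing asymmetry, giving ${\bf B}_{\delta_2}(y)\subseteq{\bf B}^{\eta}(y)$) already suffices, which is exactly why the paper can route the argument through Proposition~\ref{prop:compiffopcomp}.
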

\begin{proof}
Clearly, $d({\bf B}_{\varepsilon}(x),{\bf B}_{\delta}(y))\le d(x,y)$,
thus $d(\mathcal{B}_{x},\mathcal{B}_{y})\le d(x,y)$, and therefore
$d(\mathcal{F}_{x},\mathcal{F}_{y})\le d(x,y)$. For the other direction,
we will use the fact that $\mathcal{F}_{y}=\mathcal{F}^{y}$ (cf.
Proposition~\ref{prop:compiffopcomp}), so it suffices to show that
$d(x,y)\le d(\mathcal{B}_{x},\mathcal{B}^{y})$. To that end, let
$\rho\succ0$, and $\eta\succ0$ with $2\cdot\eta\le\rho$. Since in general $d(x,y)-\varepsilon-\delta\le d({\bf B}_{\varepsilon}(x),{\bf B}^{\delta}(y))$ we have $d(\mathcal{B}_{x},\mathcal{B}^{y})\ge d({\bf B}_{\eta}(x),{\bf B}^{\eta}(y))\ge(d(x,y)-\eta)-\eta=d(x,y)-(\eta+\eta)\ge d(x,y)-\rho$.
Thus, $d(x,y)\le d(\mathcal{B}_{x},\mathcal{B}^{y})+\rho$, and as
$\rho\succ0$ is arbitrary, the desired inequality follows. \end{proof}
\begin{cor}
If $X$ is separated and has uniformly vanishing asymmetry, then the
canonical embedding $\iota:X\to\hat{X}$ is injective. \end{cor}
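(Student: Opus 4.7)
The plan is to read off injectivity directly from Lemma~\ref{lem:canonicalEmbedIsIsom} together with the separation hypothesis. Suppose $x,y \in X$ satisfy $\iota(x) = \iota(y)$, that is $\mathcal{F}_x = \mathcal{F}_y$ as filters on $X$. Then in particular $d(\mathcal{F}_x,\mathcal{F}_y) = d(\mathcal{F}_x,\mathcal{F}_x) = 0$, and likewise $d(\mathcal{F}_y,\mathcal{F}_x) = 0$; the vanishing of the diagonal distance on $\tilde{X}$ was already recorded in the proof that $(\tilde{X},d)$ is a $V$-space, stemming from the fact that $F \cap F' \neq \emptyset$ for all $F, F' \in \mathcal{F}_x$.

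Next I would invoke Lemma~\ref{lem:canonicalEmbedIsIsom} in both directions. The lemma, as stated and proved, establishes $d(\mathcal{F}_x,\mathcal{F}_y) = d(x,y)$; the argument is symmetric in the roles of $x$ and $y$, so running it again with the roles swapped gives $d(\mathcal{F}_y,\mathcal{F}_x) = d(y,x)$. Combining this with the previous paragraph yields $d(x,y) = d(y,x) = 0$. Since $X$ is separated, this forces $x = y$, establishing injectivity of $\iota$.

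There is no real obstacle here; the only point that deserves a moment's care is the interpretation of the word \emph{isometry} in the (asymmetric) $V$-space setting as equality of distances in each ordered direction, which is precisely what Lemma~\ref{lem:canonicalEmbedIsIsom} supplies (once applied to both $(x,y)$ and $(y,x)$). This is also the reason separation, rather than a one-sided vanishing condition, is the right hypothesis: one must rule out both $d(x,y)=0$ and $d(y,x)=0$ simultaneously, and the canonical embedding delivers both.
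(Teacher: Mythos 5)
Your argument is correct and is exactly the (omitted) argument the paper intends: the isometry property of Lemma~\ref{lem:canonicalEmbedIsIsom}, applied to both ordered pairs $(x,y)$ and $(y,x)$, turns $\iota(x)=\iota(y)$ into $d(x,y)=d(y,x)=0$, and separation finishes. Nothing further is needed.
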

\begin{lem}
\label{lem:CanonEmbIsDense}If $X$ has uniformly vanishing asymmetry,
then the image $\iota(X)$ is dense in $\hat{X}$. \end{lem}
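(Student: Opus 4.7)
The plan is to show that for every $\mathcal{G} \in \hat{X}$ and every $\varepsilon \succ 0$ there exists $x \in X$ with $d(\mathcal{G}, \mathcal{F}_x) \prec \varepsilon$, which is density of $\iota(X)$ in the topology $\mathcal{O}(\hat{X})$ generated by the sets $\{\mathcal{H} \in \hat{X} : d(\mathcal{G}, \mathcal{H}) \prec \varepsilon\}$.

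First I would apply the interpolation property of $V$ to pick $\varepsilon' \succ 0$ with $\varepsilon \succ \varepsilon' \succ 0$, and then invoke uniformly vanishing asymmetry to choose $\eta \succ 0$ that is a uniform modulus of symmetry for $\varepsilon'$. The Cauchy property of $\mathcal{G}$ yields some $x \in X$ with ${\bf B}_{\eta}(x) \in \mathcal{G}$; this $x$ serves as the approximant.

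The key estimate is $d(\mathcal{G}, \mathcal{F}_x) \le \varepsilon'$, and it suffices to bound $d(G, H)$ for arbitrary $G \in \mathcal{G}$ and $H \in \mathcal{F}_x$. Since $\mathcal{G}$ is a proper filter, $G \cap {\bf B}_{\eta}(x)$ is non-empty; pick $g$ from this intersection, giving $d(x, g) \le \eta$ and hence $d(g, x) \le \varepsilon'$ by the choice of $\eta$. Any $H \in \mathcal{F}_x$ contains some ${\bf B}_{\delta}(x)$, which contains $x$ itself, so $d(G, H) \le d(g, x) \le \varepsilon'$. Taking the supremum over $G \in \mathcal{G}$ and $H \in \mathcal{F}_x$ delivers $d(\mathcal{G}, \mathcal{F}_x) \le \varepsilon'$, and then $\varepsilon \succ \varepsilon' \ge d(\mathcal{G}, \mathcal{F}_x)$ gives $d(\mathcal{G}, \mathcal{F}_x) \prec \varepsilon$ via the property that $x \succ y$ and $y \ge z$ imply $x \succ z$.

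The main obstacle is the asymmetry of $d$: membership ${\bf B}_{\eta}(x) \in \mathcal{G}$ supplies only bounds of the form $d(x, g) \le \eta$, while the triangle inequality applied to a representative $g \in G$ and the point $x \in H$ needs a bound on $d(g, x)$. Uniformly vanishing asymmetry is exactly the hypothesis that converts one direction into the other, making the approximation of $\mathcal{G}$ by $\mathcal{F}_x$ possible and explaining why density of $\iota(X)$ would fail in general $V$-spaces without this symmetry assumption.
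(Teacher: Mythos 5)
Your proof is correct and follows essentially the same route as the paper's: choose a uniform modulus of symmetry $\eta$, use Cauchyness of $\mathcal{G}$ to find $x$ with $\mathbf{B}_{\eta}(x)\in\mathcal{G}$, pick a point in $G\cap\mathbf{B}_{\eta}(x)$, and flip the direction of the distance estimate via uniformly vanishing asymmetry to bound $d(G,H)$ for $H\in\mathcal{F}_x$. The only difference is that you fold the interpolation step $\varepsilon\succ\varepsilon'\succ 0$ into the argument to get $d(\mathcal{G},\mathcal{F}_x)\prec\varepsilon$ directly, which is a harmless (and slightly more explicit) way of passing from the metric bound to topological density.
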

\begin{proof}
Fix $\mathcal{G}\in\hat{X}$ and $\varepsilon\succ0$. Let $\delta\succ0$
be a uniform modulus of symmetry for $\varepsilon$, and since $\mathcal{G}$
is Cauchy we may find $x\in X$ with ${\bf B}_{\delta}(x)\in\mathcal{G}$.
To show that $d(\mathcal{G},\mathcal{F}_{x})\le\varepsilon$ it suffices
to show that $d(G,{\bf B}_{\eta}(x))\le\varepsilon$ for all $\eta\succ0$
and $G\in\mathcal{G}$. Let $y\in G\cap{\bf B}_{\delta}(x)$. Then
$d(x,y)\le\delta$ implies $d(G,{\bf B}_{\eta}(x))\le d(G\cap{\bf B}_{\delta}(x),x)\le d(y,x)\le\varepsilon$.\end{proof}
\begin{thm}
\label{thm:IsComplete}If $X$ has uniformly vanishing asymmetry,
then $\hat{X}$ is Cauchy complete. \end{thm}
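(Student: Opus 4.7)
The plan, given a proper Cauchy filter $\Phi$ on $\hat{X}$, is to exhibit $\mathcal{G}\in\hat{X}$ with $\Phi\to\mathcal{G}$. My candidate is the \emph{diagonal filter}
\[
\mathcal{G}_{0}=\{S\subseteq X\mid\{\mathcal{F}\in\hat{X}\mid S\in\mathcal{F}\}\in\Phi\}
\]
on $X$, together with its roundification $\mathcal{G}=(\mathcal{G}_{0})_{\succ}$. That $\mathcal{G}_{0}$ is a proper filter is routine: closure under supersets is immediate, closure under binary intersection reduces to the identity $\{\mathcal{F}\mid S\in\mathcal{F}\}\cap\{\mathcal{F}\mid T\in\mathcal{F}\}=\{\mathcal{F}\mid S\cap T\in\mathcal{F}\}$, and propriety follows from $\Phi\neq\mathcal{P}(\hat{X})$ because $\{\mathcal{F}\mid\emptyset\in\mathcal{F}\}=\emptyset$. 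As a sanity check, when $\Phi$ is principal at some $\mathcal{F}_{0}\in\hat{X}$ one recovers $\mathcal{G}_{0}=\mathcal{F}_{0}$, exactly the expected limit.

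The first substantive step is to verify that $\mathcal{G}_{0}$ is Cauchy. Given $\varepsilon\succ0$, I would pick $\delta\succ0$ with $2\cdot\delta\le\varepsilon$, a uniform modulus of symmetry $\delta'\succ0$ for $\delta$, and set $\eta=\delta\wedge\delta'$. Cauchy-ness of $\Phi$ on $\hat{X}$ supplies $\mathcal{F}^{*}\in\hat{X}$ with $A:={\bf B}_{\eta}(\mathcal{F}^{*})\in\Phi$, and Cauchy-ness of $\mathcal{F}^{*}$ on $X$ supplies $x\in X$ with ${\bf B}_{\eta}(x)\in\mathcal{F}^{*}$. The crux is to show ${\bf B}_{\varepsilon}(x)\in\mathcal{F}$ for every $\mathcal{F}\in A$, which yields $A\subseteq\{\mathcal{F}\mid{\bf B}_{\varepsilon}(x)\in\mathcal{F}\}$ and hence ${\bf B}_{\varepsilon}(x)\in\mathcal{G}_{0}$. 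Cauchy-ness of $\mathcal{F}$ supplies, for any small $\rho\succ0$, some $y\in X$ with ${\bf B}_{\rho}(y)\in\mathcal{F}$; then $d(\mathcal{F}^{*},\mathcal{F})\le\eta$ combined with a Proposition~\ref{prop:conv}-style estimate and the asymmetry swap $d(y,v)\leadsto d(v,y)$ afforded by uniformly vanishing asymmetry produces $d(x,y)+\rho\le\varepsilon$, whence ${\bf B}_{\rho}(y)\subseteq{\bf B}_{\varepsilon}(x)$ and so ${\bf B}_{\varepsilon}(x)\in\mathcal{F}$. Corollary~\ref{cor:problematicSolved} then promotes $\mathcal{G}_{0}$ to a minimal Cauchy filter $\mathcal{G}\in\hat{X}$.

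It remains to verify $\Phi\to\mathcal{G}$, that is, ${\bf B}_{\varepsilon}(\mathcal{G})\in\Phi$ for each $\varepsilon\succ0$. I would rerun the construction above with a suitably smaller parameter to produce $A\in\Phi$ satisfying $d(\mathcal{G},\mathcal{F})\le\varepsilon$ for every $\mathcal{F}\in A$; the required bound on $d(G,F)$ for generic $G\in\mathcal{G}$ and $F\in\mathcal{F}$ is again a triangle estimate in the spirit of Proposition~\ref{prop:conv}, exploiting that basis balls ${\bf B}_{\varepsilon'}(x)$ of $\mathcal{G}$ are centered at points $x$ lying in balls belonging to some $\mathcal{F}^{*}$ with ${\bf B}_{\eta}(\mathcal{F}^{*})\in\Phi$. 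The principal obstacle throughout is bookkeeping across two nested filter levels: every triangle inequality must at some point convert $d(y,v)$ to $d(v,y)$, and each such swap costs one uniform modulus of symmetry. This is essentially the same dance as in the verification that $\tilde{X}$ has uniformly vanishing asymmetry, now wrapped inside an outer Cauchy argument on $\hat{X}$; aligning the inner and outer moduli is where the real technical work lies.
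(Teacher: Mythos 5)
Your proposal is correct and follows essentially the same route as the paper: your diagonal filter $\mathcal{G}_{0}$ coincides (by upward closure of $\Phi$) with the paper's $\mathcal{F}=\{F\subseteq X\mid\exists A\in\Phi,\,F\in\bigcap A\}$, and the paper likewise proves this filter Cauchy via nested modulus-of-symmetry estimates and then shows $\Phi$ converges to its roundification $\mathcal{F}_{\succ}$. The convergence step you leave as a sketch is carried out in the paper exactly as you indicate, using properness of $\Phi$ to intersect ${\bf B}_{\delta}(\mathcal{L})$ with a set $A$ witnessing $F_{0}\in\mathcal{F}$ and then a triangle estimate with one symmetry swap.
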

\begin{proof}
It suffices to show that every proper Cauchy filter on $\hat{X}$ converges
to a minimal Cauchy filter on $X$. Let $\mathbb{A}$ be a Cauchy
filter on $\hat{X}$ and $\varepsilon\succ0$. Then there is a minimal
Cauchy filter $\mathcal{L}\in\hat{X}$ such that $\mathbf{B}_{\varepsilon}(\mathcal{L})=\{\mathcal{G}\in\hat{X}\mid d(\mathcal{L},\mathcal{G})\le\varepsilon\}$
is in $\mathbb{A}$. It is straightforward to verify that $\mathcal{F}=\{F\subseteq X\mid\exists A\in\mathbb{A},F\in\bigcap A\}$
is a filter. Next, to show that $\mathcal{F}$ is Cauchy, let $\varepsilon\succ0$
and $\delta_{1}\succ0$ with $4\cdot\delta_{1}\le\varepsilon$.
Further, let  $\delta_{2}\succ0$ be a uniform modulus of symmetry for $\delta_{1}$.
Note  $\delta _3 = \delta_{1}\wedge\delta_{2}\succ0$. Fix $\mathcal{G}\in\mathbf{B}_{\varepsilon}(\mathcal{L})$.
Since $\mathcal{G}$ is Cauchy, there is $y\in X$ such that $\mathbf{B}_{\delta_{3}}(y)\in\mathcal{G}$.
For $a\in\mathbf{B}_{\delta_{3}}(y)$, we have $d(x,a)\le d(x,y)+d(y,a)\le d(\mathbf{B}_{\delta_{1}}(x),\mathbf{B}_{\delta_{3}}(y))+2\cdot\delta_{1}+\delta_{3}\le4\cdot\delta_{1}\le\varepsilon$, thus $\mathbf{B}_{\delta_{3}}(y)\subseteq\mathbf{B}_{\varepsilon}(x)$
which implies $\mathbf{B}_{\varepsilon}(x)\in\mathcal{G}$. Since $\mathcal{G}\in\mathbf{B}_{\varepsilon}(\mathcal{L})$
is arbitrary, $\mathbf{B}_{\varepsilon}(x)\in\bigcap\mathbf{B}_{\varepsilon}(\mathcal{L})$,
thus $\mathbf{B}_{\varepsilon}(x)\in\mathcal{F}$.

Finally, we show that $\mathbb{A}$ converges to the minimal Cauchy
filter $\mathcal{F}_{\succ}$. Let $\varepsilon\succ0$ and $\delta_{1}\succ0$ 
with $2\cdot\delta_{1}\le\varepsilon$, and further let  $\delta_{2}\succ0$ be a 
uniform modulus of symmetry for $\delta_{1}$. 
Note that $\delta = \delta_{1}\wedge\delta_{2}\succ 0$. 
There is $\mathcal{L}\in\hat{X}$
such that $\mathbf{B}_{\delta}(\mathcal{L})\in\mathbb{A}$. Then it
suffices to show that $\mathbf{B}_{\delta}(\mathcal{L})\subseteq\mathbf{B}_{\varepsilon}(\mathcal{F}_{\succ})$.
Let $\mathcal{M}\in\mathbf{B}_{\delta}(\mathcal{L})$, $L_{0}\in\mathcal{L}$
and $F_{0}\in\mathcal{F}$. This means that there is $A\in\mathbb{A}$
such that $F_{0}\in\bigcap A$. Since $\mathbb{A}$ is a proper filter, $\mathbf{B}_{\delta}(\mathcal{L})\cap A\neq\emptyset$
and $d(\mathcal{L},\mathcal{G})\le\delta\le\delta_{2}$
implies $d(\mathcal{G},\mathcal{L})\le\delta_{1}$ for every $\mathcal{G}\in\mathbf{B}_{\delta}(\mathcal{L})\cap A$.
Since $F_{0}$ is also in $\mathcal{G}$, $d(F_{0},L_{0})\le\bigvee_{G\in\mathcal{G},L\in\mathcal{L}}d(G,L)=d(\mathcal{G},\mathcal{L})\le\delta_{1}$,
thus $d(F_{0},L_{0})\le\delta_{1}$. Since $F_{0}\in\mathcal{F}$
and $L_{0}\in\mathcal{L}$ are arbitrary, we obtain $d(\mathcal{F},\mathcal{L})\le\delta_{1}$.
Then $d(\mathcal{F}_{\succ},\mathcal{L})\le d(\mathcal{F}_{\succ},\mathcal{F})+d(\mathcal{F},\mathcal{L})\le0+\delta_{1}$
which implies that $d(\mathcal{F}_{\succ},\mathcal{M})\le d(\mathcal{F}_{\succ},\mathcal{L})+d(\mathcal{L},\mathcal{M})\le\delta_{1}+\delta\le\delta_{1}+\delta_{1}\le\varepsilon$,
thus $\mathcal{M}\in\mathbf{B}_{\varepsilon}(\mathcal{F}_{\succ})$.
Since $\mathcal{M}\in\mathbf{B}_{\delta}(\mathcal{L})$ is arbitrary,
$\mathbf{B}_{\delta}(\mathcal{L})\subseteq\mathbf{B}_{\varepsilon}(\mathcal{F}_{\succ})$
and since $\varepsilon\succ0$ is arbitrary, it follows that $\mathbb{A}$
converges to $\mathcal{F}_{\succ}$.
\end{proof}
Obviously, the construction $X\mapsto\hat{X}$ is functorial. The
following two corollaries follow by standard arguments from Lemma~\ref{lem:canonicalEmbedIsIsom},
Lemma~\ref{lem:CanonEmbIsDense}, and Theorem~\ref{thm:IsComplete}:
\begin{cor}
The universal property 
\[
\xymatrix{ & X\ar[dr]^{f}\ar[dl]_{\iota}\\
\hat{X}\ar[rr]_{F} &  & Y
}
\]
stating that for any Cauchy complete $V$-space $Y$ with uniformly
vanishing asymmetry and any uniformly continuous function $f$ there
exists a unique uniformly continuous extension $F$, holds for all
separated $V$-spaces $X$ with uniformly vanishing asymmetry. 
\end{cor}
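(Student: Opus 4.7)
The plan is to run the standard density-and-completion argument. Given a uniformly continuous $f\colon X\to Y$, for each minimal Cauchy filter $\mathcal{G}\in\hat{X}$ I would consider the pushforward filter $f_{*}\mathcal{G}=\{S\subseteq Y\mid f^{-1}(S)\in\mathcal{G}\}$. Uniform continuity of $f$ ensures that $f_{*}\mathcal{G}$ is a Cauchy filter on $Y$: given $\varepsilon\succ 0$, pick a modulus $\delta\succ 0$ witnessing uniform continuity of $f$, and pick $x\in X$ with ${\bf B}_{\delta}(x)\in\mathcal{G}$; then ${\bf B}_{\varepsilon}(f(x))\supseteq f({\bf B}_{\delta}(x))$, hence ${\bf B}_{\varepsilon}(f(x))\in f_{*}\mathcal{G}$. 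Since $Y$ is Cauchy complete (and, interpreted in the separated setting, so that limits are unique), $f_{*}\mathcal{G}$ has a well-defined limit, which I would declare to be $F(\mathcal{G})$.

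Two verifications then remain. First, $F\circ\iota=f$: for $x\in X$, $\iota(x)=\mathcal{F}_{x}$, and the computation above shows ${\bf B}_{\varepsilon}(f(x))\in f_{*}\mathcal{F}_{x}$ for every $\varepsilon\succ 0$, so $f_{*}\mathcal{F}_{x}\to f(x)$ and thus $F(\iota(x))=f(x)$. Second, $F$ is uniformly continuous: given $\varepsilon\succ 0$, I would unpack the distance $d(\mathcal{F}_{\succ},\mathcal{G}_{\succ})$ on $\hat{X}$ and, invoking a uniform modulus of symmetry together with the Cauchy property of both filters (exactly as in the proof that $\tilde{X}$ has uniformly vanishing asymmetry), extract points $x_{1},x_{2}\in X$ whose distance $d(x_{1},x_{2})$ is controlled by $d(\mathcal{F}_{\succ},\mathcal{G}_{\succ})$ up to a prescribed slack, and push forward through $f$ to bound $d(F(\mathcal{F}_{\succ}),F(\mathcal{G}_{\succ}))$ by a triangle-inequality argument in $Y$ using that $F(\mathcal{F}_{\succ})$ is near $f(x_{1})$ and $F(\mathcal{G}_{\succ})$ is near $f(x_{2})$.

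Uniqueness of $F$ is then the routine dense-subset argument enabled by Lemma~\ref{lem:CanonEmbIsDense} and Lemma~\ref{lem:canonicalEmbedIsIsom}: any uniformly continuous extension of $f$ must equal $f$ on $\iota(X)$, and uniform continuity propagates the agreement to the dense closure of $\iota(X)$, namely all of $\hat{X}$, with separation of $Y$ finishing the argument.

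The main obstacle I anticipate is precisely the bookkeeping in the uniform continuity of $F$: translating an estimate on the filter-distance $d(\mathcal{F}_{\succ},\mathcal{G}_{\succ})$, defined via meets and joins of pairwise distances, into one on $d(x_{1},x_{2})$ for concretely chosen representatives, in the presence of asymmetry. This is the same manoeuvre already performed for $\tilde{X}$ and for the density lemma, and I would handle it by combining a $2\cdot\delta$-halving step with a uniform modulus of symmetry step, so the proof is a matter of assembling pieces already established rather than any genuinely new technique.
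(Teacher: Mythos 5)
Your proposal is correct and is exactly the ``standard argument'' that the paper invokes without writing out: define $F(\mathcal{G})$ as the limit of the pushforward Cauchy filter $f_{*}\mathcal{G}$ (using Theorem~\ref{thm:IsComplete}-style completeness of $Y$), verify $F\circ\iota=f$ and uniform continuity of $F$ by the same ball-and-modulus-of-symmetry bookkeeping used for $\tilde{X}$, and get uniqueness from Lemma~\ref{lem:CanonEmbIsDense} and Lemma~\ref{lem:canonicalEmbedIsIsom}. The one caveat you flag yourself --- that $Y$ must be read as separated for the limit of $f_{*}\mathcal{G}$ to be unique (hence for $F$ to be well defined) and for the density argument to force uniqueness of $F$ --- is an imprecision in the corollary's statement rather than a gap in your argument.
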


\begin{cor}
Every separated $V$-space $X$ with uniformly vanishing asymmetry
has a completion, unique up to a unique isomorphism. 
\end{cor}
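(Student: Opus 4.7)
The plan is to handle existence and uniqueness separately, with both ultimately reduced to results already in hand.

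For existence, the candidate is $\hat{X}$ itself, the $V$-space of minimal Cauchy filters equipped with $d$, together with the canonical embedding $\iota:X\to\hat{X}$. The corollary following the identification $\hat{X}\cong\tilde{X}_{0}$ tells us $\hat{X}$ is a separated $V$-space with uniformly vanishing asymmetry; Lemma~\ref{lem:canonicalEmbedIsIsom} together with its corollary makes $\iota$ an injective isometry (using that $X$ is separated); Lemma~\ref{lem:CanonEmbIsDense} gives density of the image; and Theorem~\ref{thm:IsComplete} establishes Cauchy completeness. These four facts together say that $(\hat{X},\iota)$ is a completion.

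For uniqueness, let $(Y,j)$ be any other completion of $X$. Since $Y$ is Cauchy complete with uniformly vanishing asymmetry, the universal property of the preceding corollary applied to the uniformly continuous $j:X\to Y$ produces a unique uniformly continuous $F:\hat{X}\to Y$ with $F\iota=j$. It remains to check that $F$ is an isomorphism. For isometricness, on $\iota(X)\times\iota(X)$ we already have $d(F\iota(x),F\iota(y))=d(j(x),j(y))=d(x,y)=d(\iota(x),\iota(y))$; by uniform continuity of $F$, density of $\iota(X)$ in $\hat{X}$, and the characterization $a\le b\iff a\le b+\varepsilon$ for every $\varepsilon\succ0$, this equality propagates to all of $\hat{X}\times\hat{X}$ via the usual $\varepsilon$-estimates available in the value quantale. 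For surjectivity, given $z\in Y$, use density of $j(X)$ to obtain a Cauchy filter base in $j(X)$ converging to $z$, pull it back along $j$ and push it forward along $\iota$ to a Cauchy filter in $\hat{X}$, which converges by Cauchy completeness to some $\mathcal{F}\in\hat{X}$; continuity of $F$ then forces $F(\mathcal{F})=z$.

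An isometric bijection between $V$-spaces is automatically an isomorphism in ${\bf Met_{V}^{uva}}$ since its set-theoretic inverse is again an isometry and hence uniformly continuous. Thus $F$ is the asserted unique isomorphism, its uniqueness as a morphism over $X$ already being contained in the universal property. The only genuinely nontrivial step is the density argument for isometricness: classical sequential limits are unavailable in the value-quantale setting, so one has to replace them by $\varepsilon$-$\delta$ chasing that leans on the interpolation-style properties of $V$ listed in Section~2, in particular $a=\bigwedge\{a+\varepsilon\mid\varepsilon\succ0\}$.
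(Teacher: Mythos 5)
Your proof is correct and is precisely the ``standard argument'' the paper invokes: existence is just the conjunction of Lemma~\ref{lem:canonicalEmbedIsIsom}, Lemma~\ref{lem:CanonEmbIsDense}, and Theorem~\ref{thm:IsComplete}, and uniqueness follows from the universal property together with the usual density/$\varepsilon$-chase showing the comparison map $F$ is an isometric bijection. The one caveat is that, like the paper, you implicitly assume the competing completion $Y$ is separated and has uniformly vanishing asymmetry --- needed both to invoke the universal property and to get uniqueness of limits in your surjectivity step --- even though the paper's stated definition of a completion does not literally require this.
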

Relating back to the categorical point-of-view, i.e., to the $2$-functor
${\bf QMet_{-}}:\mathbb{V}\to{\bf Cat}$ from Section~\ref{sub:catPers},
the constructions above may be summarized as follows. Consider the
obvious 2-functors ${\bf sMet_{-}^{uva}},{\bf scMet_{-}^{uva}}:\mathbb{V}\to{\bf Cat}$
mapping $V$ to ${\bf sMet_{V}^{uva}}$ and to $s{\bf cMet_{V}^{uva}}$
(the categories of separated $V$-spaces with uniformly vanishing
asymmetry and of complete separated $V$-spaces with uniformly vanishing
asymmetry), respectively. The completion functor ${\bf sMet_{V}^{uva}}\to{\bf scMet_{V}^{uva}}$
defines a $2$-natural transformation $\hat{-}:{\bf sMet_{-}^{uva}}\to{\bf scMet_{-}^{uva}}$.

\bibliography{refs}{}

\begin{thebibliography}{10}

\bibitem{Alemany96}
E.~Alemany and S.~Romaguera.
\newblock On half-completion and bicompletion of quasi-metric spaces.
\newblock {\em Commentationes Mathematicae Universitatis Carolinae},
  37(4):749--756, 1996.

\bibitem{Blass77}
A.~Blass.
\newblock Two closed categories of filters.
\newblock {\em Fundamenta Mathematicae}, 94(2):129--143, 1977.

\bibitem{Carlson71}
J.~W. Carlson and T.~L. Hicks.
\newblock On completeness in quasi-uniform spaces.
\newblock {\em Journal of Mathematical Analysis and Applications}, 34:618--627,
  1971.

\bibitem{Doitchinov88}
D.~Doitchinov.
\newblock On completeness in quasi-metric spaces.
\newblock {\em Topology and its Applications}, 30:127--148, 1988.

\bibitem{Doitchinov91}
D.~Doitchinov.
\newblock A concept of completeness of quasi-uniform spaces.
\newblock {\em Topology and its Applications}, 38:205--217, 1991.

\bibitem{Flagg99}
B.~Flagg, R.~Kopperman, and P.~Sünderhauf.
\newblock Smyth completion as bicompletion.
\newblock {\em Topology and its Applications}, 91:169--180, 1999.

\bibitem{Flagg97a}
R.~C. Flagg.
\newblock Quantales and continuity spaces.
\newblock {\em Algebra Universalis}, 37:257--276, 1997.

\bibitem{SH}
H.~Heymans and I.~Stubbe.
\newblock Symmetry and {C}auchy completion of quantaloid-enriched categories.
\newblock {\em Theory Appl. Categ.}, 25:No. 11, 276--294, 2011.

\bibitem{Kivuvu08}
C.~M. Kivuvu and H.~P.~A. Kunzi.
\newblock A double completion for an arbitrary $\uppercase{T}_0$-quasi-metric
  space.
\newblock {\em The Journal of Logic and Algebraic Programming}, 76(2):251--269,
  2008.

\bibitem{Kivuvu09}
C.~M. Kivuvu and H.~P.~A. Kunzi.
\newblock The $\uppercase{B}$-completion of a $\uppercase{T}_0$-quasi-metric
  space.
\newblock {\em Topology and its Applications}, 156(12):2070--2081, 2009.

\bibitem{Kunzi02}
H.~P.~A. Kunzi and M.~P. Schellekens.
\newblock On the yoneda completion of a quasi-metric space.
\newblock {\em Theoretical Computer Science}, 278(1):159--194, 2002.

\bibitem{Lowen99}
R.~Lowen and D.~Vaughan.
\newblock A non quasi-metric completion for quasi-metric spaces.
\newblock 1999.

\bibitem{Render95}
H.~Render.
\newblock Nonstandard methods of completing quasi-uniform spaces.
\newblock {\em Topology and its Applications}, 62(2):101--125, 1995.

\bibitem{Romaguera02}
S.~Romaguera and M.~A. Sanchez-Granero.
\newblock Completions and compactifications of quasi-uniform spaces.
\newblock {\em Topology and its Applications}, 123(2):363--382, 2002.

\bibitem{Sherwood66}
H.~Sherwood.
\newblock On the completion of probabilistic metric spaces.
\newblock {\em Probability Theory and Related Fields}, 6(1):62--64, 1966.

\bibitem{Weiss13}
I.~Weiss.
\newblock A note on the metrizability of spaces.
\newblock {\em alg. universalis (to appear)}, 2013.

\end{thebibliography}
\bibliographystyle{plain}
\end{document}